\documentclass{rmmcart}

\usepackage{latexsym, amssymb, amsthm,amsmath}

\theoremstyle{theorem}
\newtheorem{theorem}{Theorem}
\newtheorem{lemma}[theorem]{Lemma}
\newtheorem{corollary}[theorem]{Corollary}

\theoremstyle{definition}
\newtheorem*{definition}{Definition}
\newtheorem*{definitions}{Definitions}
\newtheorem{remark}[theorem]{Remark}
\newtheorem*{conj}{Conjecture}
\newtheorem*{notation}{Notation}
\newtheorem*{example}{Examples}
\newtheorem*{remark*}{Remark}

\title{Down the Large Rabbit Hole}

\author{Aaron Robertson}
\address{Department of Mathematics, Colgate University, Hamilton, New York}
\email{arobertson@colgate.edu}
\date{August 2017}

\keywords{
$2$-Large conjecture, arithmetic progressions, van der Waerden
}

\subjclass{05D10}

\begin{document}

\begin{abstract}
This article documents my journey down the rabbit hole,
chasing what I have come to know as a particularly unyielding
problem in Ramsey theory on the integers: the $2$-Large Conjecture.
This conjecture states that if $D \subseteq \mathbb{Z}^+$ has the
property that every $2$-coloring of $\mathbb{Z}^+$ admits arbitrarily
long monochromatic arithmetic progressions with common difference
from $D$ then the same property holds for any finite number of
colors.  We hope to provide a roadmap for future researchers and
also provide some new results related to the $2$-Large Conjecture.
\end{abstract}

\maketitle

\section{Prologue}

Mathematicians tend not to write of their failures.  This is rather 
unfortunate as there are surely countless creative ideas that have never
seen the light of day; I have long believed that a \textsc{Journal of
Failed Attempts} should exist.  My goal with this article is 3-fold:
(1) a chronicle of my battle with what I consider a particularly difficult
conjecture; (2) to present my progress on this conjecture; and (3) to
provide a roadmap to those who want to take on this challenging conjecture.

The majority of this work took place over the course of a year, circa 
2010\footnote{Supported in part by the National Security Agency
[grant number H98230-10-1-0204].}. 
Since that time I have frequently revisited this intriguing problem, even though
that year was mostly an exercise in banging my head against various brick walls.  I wish I knew how to quit it.  I love this
conjecture, so much so that I've followed it down the rabbit hole.
However, if we are to take away one message from Steinbeck's
{\it Of Mice and Men}, it's that sometimes the rabbit doesn't love you back.

\section{What's Up, Doc?}

Ramsey theory may best be summed up as ``the study of the preservation of structures under set partitions" \cite{LR}.
For this article, we will restrict our attention to the positive integers, and our investigation to the
set of arithmetic progressions (our structure).  As is common in Ramsey theory, we will use colors to
denote set partition membership.  Formally, for $r \in \mathbb{Z}^+$, an $r$-coloring of the positive
integers is defined by $\chi: \mathbb{Z}^+ \rightarrow \{0,1,\dots,r-1\}$.  We say that
$S \subseteq \mathbb{Z}^+$ is {\it monochromatic} under $\chi$ if $|\chi(S)|=1$. 
In order to
discuss the preservation of structure, and, subsequently, state the 2-Large Conjecture,
we turn to
a fundamental result in Ramsey theory on the integers: van der Waerden's Theorem \cite{vdw}.

\begin{theorem}[van der Waerden's Theorem] For any fixed positive integers $k$ and $r$, every $r$-coloring
of $\mathbb{Z}^+$ admits a monochromatic $k$-term arithmetic progression. \label{vdw}
\end{theorem}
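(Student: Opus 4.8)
The plan is to prove the equivalent finitary statement and then recover Theorem~\ref{vdw} by compactness. Concretely, I would show that for all $k,r$ there is a least integer $W(k,r)$ such that every $r$-coloring of $\{1,2,\ldots,W(k,r)\}$ contains a monochromatic $k$-term arithmetic progression; a standard compactness (K\"onig's lemma) argument over finite colorings then upgrades this to the infinite statement. With that reduction dispatched, the core is to prove $W(k,r)<\infty$ by induction on the length $k$, the base case $k\le 2$ being immediate.

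For the inductive step I would fix $k$, assume $W(k-1,r)<\infty$ for \emph{every} $r$, and introduce the key notion of \emph{focusing}. Call monochromatic $(k-1)$-term progressions $A_1,\ldots,A_m$ \emph{color-focused at} $f$ if each $A_i=\{a_i,a_i+d_i,\ldots,a_i+(k-2)d_i\}$ satisfies $a_i+(k-1)d_i=f$, the colors of the $A_i$ are pairwise distinct, and $f$ itself avoids all of these colors. I would then prove, by an internal induction on $m$ for $1\le m\le r$, the statement $P(m)$: there is an integer $n(k,r,m)$ so that every $r$-coloring of an interval of that length contains either a monochromatic $k$-AP or a color-focused family of $m$ monochromatic $(k-1)$-APs. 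Once $P(r)$ is reached, the focus point $f$ must repeat one of the $r$ distinct colors already in use (pigeonhole), and appending $f$ to the matching progression yields the desired monochromatic $k$-AP, so $W(k,r)\le n(k,r,r)$.

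The base case $P(1)$ follows directly from $W(k-1,r)<\infty$: a monochromatic $(k-1)$-AP exists, and enlarging the interval forces its focus into range, after which either the focus matches the progression's color (a $k$-AP) or it does not (a color-focused family of size one). The decisive and most delicate step is $P(m)\Rightarrow P(m+1)$, which I expect to be the main obstacle to execute cleanly. Here I would tile a long interval by blocks of length $L=n(k,r,m)$, color each block by its internal pattern (at most $r^L$ patterns), and invoke the already-established $W(k-1,r^L)<\infty$ to locate $k-1$ equally spaced blocks, with common gap $D$, carrying identical patterns. Inside the first such block, $P(m)$ supplies a color-focused family $A_1,\ldots,A_m$ with focus $f$. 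The crux is to ``tilt'' these across the identical blocks: replacing each difference $d_i$ by $d_i+D$ produces translated monochromatic $(k-1)$-APs $A_i'$ that, together with the progression running through the copies of $f$ (common difference $D$), all share a single new focus $F=f+(k-1)D$. Verifying that each $A_i'$ stays monochromatic in its original color, that the $f$-progression contributes a genuinely new color, and that $F$ avoids all $m+1$ colors unless a $k$-AP already appears, is precisely the bookkeeping that closes the induction; tracking the focus while keeping all colors distinct is where this classical argument earns its reputation for being slippery.
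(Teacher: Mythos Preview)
Your proposal is the classical color-focusing proof of van der Waerden's theorem (induction on the length $k$, with an inner induction building a fan of differently-colored $(k-1)$-progressions focused at a common point), and it is correct as sketched.

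It is, however, a genuinely different route from what the paper does. The paper does \emph{not} give a complete proof of Theorem~\ref{vdw}. Instead, it passes to the finitary restatement (Theorem~\ref{vdw2}) and then carries out an induction on the number of colors $r$: assuming $w(k;s)$ exists for $s<r$ and for all $k$, it sets $m=w(k;r-1)$, $n=w(m;2)$, collapses an $r$-coloring of $[1,n]$ to a $2$-coloring (red versus ``some shade of blue''), extracts a monochromatic $m$-AP, and---using that arithmetic progressions are closed under translation and dilation---applies the $(r-1)$-color case inside that $m$-AP. The paper is explicit that this is only a \emph{partial} proof, since it assumes the $2$-color case as the base; the resulting conditional statement ($\star$) is precisely the hook that motivates the $2$-Large Conjecture. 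So your induction runs on $k$ and handles all $r$ at once via color-focusing, whereas the paper's induction runs on $r$ and relies on the affine invariance of APs; your argument is self-contained and actually proves the theorem, while the paper's argument is deliberately incomplete because its purpose is to isolate exactly the reduction from many colors to two that the rest of the article is about.
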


In a certain sense, we cannot break the existence of arithmetic progressions via set partitioning since
van der Waerden's Theorem proves that one of the partition classes {\it must} contain an arithmetic
progression.  If you don't believe me, try $2$-coloring the first nine positive integers without
creating a monochromatic $3$-term arithmetic progression (I'll wait).

So now that we're all on board, the next attribute of arithmetic progressions to take note of is
that they are closed under translation and dilation: if $S=\{a,a+d,a+2d,\dots,a+(k-1)d\}$ is a $k$-term arithmetic
progression, and $b$ and $c$ are positive integers, then $c+bS =\{(ab+c), (ab+c)+bd,(ab+c)+2bd,\dots,(ab+c)+(k-1)bd\}$
is also a $k$-term arithmetic progression.  It is this attribute that affords us a simple inductive argument
when proving van der Waerden's Theorem.  Specifically, assuming that the $r=2$ case of Theorem \ref{vdw}
is true (for all $k$), we may prove that it is true for general $r$ rather simply. 

In order to proceed, we need a restatement of Theorem \ref{vdw}, which is often referred to as the finite version.

\begin{theorem}[van der Waerden's Theorem restatement] For any fixed positive integers $k$ and $r$, 
there exists a minimum integer $w(k;r)$ such that every $r$-coloring
of $\{1,2,\dots,w(k;r)\}$ admits a monochromatic $k$-term arithmetic progression. \label{vdw2}
\end{theorem}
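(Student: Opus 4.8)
The plan is to deduce the finite statement (Theorem \ref{vdw2}) from the infinite statement (Theorem \ref{vdw}) by a compactness argument, and then to extract the \emph{minimum} valid integer by a short monotonicity-plus-well-ordering remark. So the real content is the implication ``infinite $\Rightarrow$ finite,'' and the existence of the minimum is bookkeeping on top of it.

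First I would argue by contradiction. Suppose no integer $N$ has the property that every $r$-coloring of $\{1,2,\dots,N\}$ forces a monochromatic $k$-term arithmetic progression. Then for each $n \in \mathbb{Z}^+$ there is a ``bad'' coloring $\chi_n \colon \{1,\dots,n\} \to \{0,\dots,r-1\}$ admitting no monochromatic $k$-term progression. The goal is to splice this family of finite bad colorings into a single bad coloring of all of $\mathbb{Z}^+$, contradicting Theorem \ref{vdw}. I would do this by an iterated-pigeonhole (diagonal) extraction: since $\chi_n(1)$ takes only $r$ values, infinitely many $\chi_n$ agree at $1$; pass to that infinite subfamily and define $\chi(1)$ as the common value. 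Within that subfamily infinitely many agree at $2$, so pass to a further subfamily and set $\chi(2)$, and continue. This yields $\chi \colon \mathbb{Z}^+ \to \{0,\dots,r-1\}$ with the crucial feature that for every window $\{1,\dots,m\}$ there exist arbitrarily large $n$ with $\chi_n$ and $\chi$ agreeing on that window. (Equivalently, one can phrase the extraction as K\"onig's Lemma applied to the finitely branching tree whose nodes are the $k$-AP-free colorings of initial segments.)

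Next I would verify that $\chi$ is itself bad. Any monochromatic $k$-term progression under $\chi$ lies inside some finite window $\{1,\dots,m\}$; choosing $n \geq m$ on which $\chi_n$ agrees with $\chi$, that same progression is monochromatic under $\chi_n$, contradicting the choice of $\chi_n$. Hence $\chi$ admits no monochromatic $k$-term progression at all, contradicting Theorem \ref{vdw}. This establishes that \emph{some} integer $N$ works. To get the minimum, observe that the collection of working integers is nonempty and forms an up-set: if $N$ works and $N' \geq N$, then restricting any $r$-coloring of $\{1,\dots,N'\}$ to $\{1,\dots,N\}$ produces a monochromatic $k$-term progression, which remains one in the larger interval. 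A nonempty subset of $\mathbb{Z}^+$ has a least element by well-ordering, and that least element is the desired $w(k;r)$.

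The hard part is the compactness/extraction step — the passage from ``bad colorings of arbitrarily long intervals exist'' to ``a single bad coloring of $\mathbb{Z}^+$ exists.'' Everything else (restriction to a subinterval, monotonicity of the working set, and well-ordering) is routine. The one subtlety to state carefully in the extraction is that the nested subfamilies stay infinite at every stage, which is exactly what the finiteness of the color set $\{0,\dots,r-1\}$ guarantees via pigeonhole.
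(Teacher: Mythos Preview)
Your proposal is correct and matches the paper's approach: the paper does not give a separate proof of Theorem~\ref{vdw2} but simply attributes the nontrivial direction to the Compactness Principle, described there as ``an application and slight modification of the diagonal argument,'' which is precisely the iterated-pigeonhole extraction you carry out. Indeed, the explicit argument the paper later gives for Lemma~\ref{lem16} is the same construction you describe (building $\gamma$ by choosing, at each coordinate, a color that occurs infinitely often among the surviving bad colorings), so your write-up is essentially a fleshed-out version of what the paper sketches.
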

  
The proof of equivalence of Theorem \ref{vdw} and Theorem \ref{vdw2} (at least the nontrivial
direction) is given by {\it The Compactness Principle}, which, in this setting, could also
be called {\it Cantor's Diagonal Principle}, as the proof is an application and slight modification of the
diagonal argument Cantor used to prove that the set of real numbers is uncountable.

Now back to the induction argument.  We may assume that $w(k;s)$ exist for $s=2,3,\dots,r-1$ for any $k\in \mathbb{Z}^+$.
Let $m=w(k;r-1)$ so that $n=w(m;2)$ exists.  Consider $\chi$, an arbitrary $r$-coloring of $\{1,2,\dots,n\}$.  For ease of exposition,
let the colors be red and $r-1$ different shades of blue.  Consider someone who cannot distinguish between shades
of blue so that the $r$-coloring looks like a $2$-coloring to this person.  By the definition of $n$, such a person
would conclude that a monochromatic $m$-term arithmetic progression exists under $\chi$.  If this monochromatic
progression is red, we are done, so we assume that it is ``blue."  Let it be $a+d, a+2d, a+3d,\dots,a+md$ and
note that, since we can distinguish between shades of blue, we have an $(r-1)$-colored $m$-term arithmetic
progression.  We have a one-to-one
correspondence between $(r-1)$-colorings of $T=\{1,2,\dots,m\}$ and $a+dT = \{a+d,a+2d,a+3d,\dots,a+md\}$.
By the definition of $m$ and because arithmetic progressions are closed under translation and dilation, we see that $T$, and hence $a+dT$, admits a monochromatic
$k$-term arithmetic progression, thereby completing the inductive step.

Of course, the previous paragraph is only a partial proof since I made the significant assumption that 
Theorem \ref{vdw} holds for two colors; however, we can state the following:

\begin{quote} \normalsize
($\star$) If every $2$-coloring of $\mathbb{Z}^+$ admits arbitrarily long monochromatic arithmetic progressions, then,
for any $r \in \mathbb{Z}^+$,
every $r$-coloring of $\mathbb{Z}^+$ admits arbitrarily long monochromatic arithmetic progressions. 
\end{quote}
Having obtained this conditional result ($\star$), the rabbit hole is starting to come into view.

Brown, Graham, and Landman \cite{BGL} investigated a strengthening of
Theorem \ref{vdw} by restricting the set of allowable common differences.

\begin{definition}[$r$-large, large, $D$-ap] Let $D\subseteq \mathbb{Z}^+$ and let $r \in \mathbb{Z}^+$.
We refer to an arithmetic progression $a,a+d, a+2d,\dots,a+(k-1)d$ with $d \in D$
as a $k$-term {\it $D$-ap}.
If for any $k \in \mathbb{Z}^+$, every $r$-coloring of $\mathbb{Z}^+$ admits a monochromatic $k$-term 
$D$-ap, then we say
that $D$ is {\it $r$-large} .  If $D$ is $r$-large for all $r \in \mathbb{Z}^+$, then we
say that $D$ is {\it large}.
\end{definition}

Using this definition, we would restate ($\star$) as:  

\begin{quote}\normalsize ($\star$) If $\mathbb{Z}^+$ is $2$-large, then $\mathbb{Z}^+$ is large.
\end{quote}

We now can read the sign above that rabbit hole.  It has the following conjecture, due to
Brown, Graham, and Landman \cite{BGL}, scrawled on it:

\begin{conj}[$2$-Large Conjecture] Let $D \subseteq \mathbb{Z}^+$.  If $D$ is $2$-large, then $D$ is large.
\end{conj}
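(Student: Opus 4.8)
The plan is to imitate the color-folding induction behind ($\star$), but carried out inside a single long monochromatic $D$-ap rather than inside an unrestricted arithmetic progression. First I would record a finite, compactness version of $r$-largeness: by the same diagonal argument that relates Theorem~\ref{vdw} to Theorem~\ref{vdw2}, the set $D$ is $r$-large if and only if for each $k$ there is an integer $N=N_r(k)$ so that every $r$-coloring of $\{1,2,\dots,N\}$ contains a monochromatic $k$-term $D$-ap. With this in hand I would prove by induction on $r$ the statement ``every $2$-large set is $r$-large,'' the base case $r=2$ being exactly the hypothesis.

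For the inductive step, assume every $2$-large set is $(r-1)$-large, fix $k$, and let $\chi$ be an $r$-coloring in which one color is red and the remaining $r-1$ colors are called shades of blue. A colorblind observer sees a $2$-coloring, so $2$-largeness of $D$ yields a long monochromatic $D$-ap $P=a+d,a+2d,\dots,a+md$ with $d\in D$. If $P$ is red we are finished; otherwise $P$ is blue, and reindexing by $t\mapsto a+dt$ the set $T=\{1,2,\dots,m\}$ carries a genuine $(r-1)$-coloring into which I would like to feed the induction hypothesis.

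The obstruction appears precisely when transporting a monochromatic progression found inside $T$ back to $\mathbb{Z}^+$. A progression in $T$ with common difference $e$ maps under $t\mapsto a+dt$ to one in $P$ with common difference $de$, and this is a legitimate $D$-ap only when $de\in D$. Since a $2$-large set need not be closed under multiplication, $de\in D$ can fail, so what the inner application actually demands is a monochromatic $D_d$-ap, where $D_d=\{e\in\mathbb{Z}^+ : de\in D\}$. Consequently the step closes once $D_d$ is $(r-1)$-large; and since the induction hypothesis upgrades any $2$-large set to an $(r-1)$-large one, it suffices to establish the single clean lemma that $D_d$ is $2$-large whenever $D$ is. In this way the whole conjecture is converted into the preservation statement that $2$-largeness survives every dilation-pullback $D\mapsto D_d$, $d\in D$.

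I expect this preservation statement to be the true heart of the difficulty, and quite possibly no easier than the conjecture itself. As a reassuring special case, if $D$ is closed under multiplication then $e\in D$ forces $de\in D$, so $D\subseteq D_d$; since any superset of a $2$-large set is again $2$-large, $D_d$ is $2$-large and the conjecture holds for such $D$. The entire content therefore lies in multiplicatively poor sets, where pulling a $2$-coloring of the ``$D_d$-world'' back to a $2$-coloring of the ``$D$-world'' distorts scales in a way I do not know how to control. My complementary line of attack would be structural: first show that $2$-largeness already forces some of the divisibility richness known to be necessary for full largeness --- for instance that $D$ must contain an even number (immediate from the parity coloring), and ideally that $D$ meets every $n\mathbb{Z}^+$ --- and then try to sandwich each $D_d$ below by an already-known large set. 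Absent a general argument, I would stress-test the scheme on tractable families such as multiplicatively closed $D$, sets defined by divisibility constraints, or ranges of polynomials, both to sharpen intuition and to pinpoint exactly which sets defeat the folding.
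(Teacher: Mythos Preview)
The statement is the $2$-Large \emph{Conjecture}: the paper does not prove it, and indeed the entire article is a chronicle of why the author could not. So there is no ``paper's own proof'' to compare against; what can be compared is your strategy versus the paper's partial results and its diagnosis of the obstruction.

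On that score your analysis is essentially identical to the paper's. The colorblind induction you describe is exactly the engine behind the paper's Theorem~\ref{main} (if $D$ is $2$-large then $D^2$ is large): there the base case succeeds precisely because a common difference $de$ with $d,e\in D$ lands in $D^2$, whereas for the conjecture itself one would need $de\in D$. The Remark immediately following Theorem~\ref{main} names the very lynchpin you isolate --- that one would need any $2$-coloring of a sufficiently long $D$-ap to contain a monochromatic $k$-term $D$-ap --- which is equivalent to your requirement that each $D_d=\{e:de\in D\}$ be $2$-large. Your honest assessment that this preservation statement may be ``no easier than the conjecture itself'' matches the paper's conclusion. Your special case of multiplicatively closed $D$ is likewise recovered in the paper's Epilogue as the corollary that a $2$-large subsemigroup of $(\mathbb{Z}^+,\cdot)$ is large. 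Finally, your suggested structural line (showing $D$ must meet every $n\mathbb{Z}^+$) is already known: the paper cites \cite{BGL} for the fact that a $2$-large set contains a multiple of every positive integer, and also notes that this necessary condition is not sufficient (e.g.\ $\{n!:n\in\mathbb{Z}^+\}$).

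In short, there is no error in your reasoning, but there is a genuine gap --- the one you yourself flag --- and it is the same gap the paper identifies as the unresolved core of the problem. Your proposal is a correct reduction to an open question of apparently equal difficulty, not a proof.
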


All known $2$-large sets are also large.  Some  $2$-large sets
are: $m\mathbb{Z}^+$ for any positive integer $m$ (in particular, the
set of even positive integers); the range of any integer-valued
polynomial $p(x)$ with $p(0)=0$; any set $\{\left\lfloor \alpha n \right\rfloor: n \in \mathbb{Z}^+\}$ with $\alpha$ irrational.  We will be
visiting all of these sets on our journey.

As we move forward, you may think you have spotted the rabbit, but that rabbit is
cunning.  Beware of false promise, which comes to you in
hare clothing.

\section{The Carrot}

So, what makes this conjecture so appealing?  Firstly, the $2$-Large Conjecture is
so very natural given the proof of conditional statement ($\star$).  Secondly, there are
several {\it a priori} disparate tools  in Ramsey theory at our disposal.  Thirdly, who doesn't like a challenge;
the lure of the carrot is strong (but don't disregard the stick).

We can approach this problem:
\begin{itemize}
\item[] (1) purely measure-theoretically,
\item[](2) using measure-theoretic ergodic systems, 
\item[](3) using discrete topological dynamical systems, 
\item[](4) algebraically through the Stone-\v{C}ech compactification of $\mathbb{Z}^+$,
and  \item[](5) combinatorically/using other ad-hoc methods.
\end{itemize}
Even though I have described these approaches as disparate, there are  connections between
them that will become clear as we carry on the investigation.

\subsection{Measure-theoretic Approach} On the measure-theoretic front, we must start with Szemer\'edi's \cite{Sze} celebrated result.
For $A \subseteq \mathbb{Z}^+$, let $\bar{d}(A)$ denote the upper density of $A$: $\bar{d}(A) = \limsup_{n \rightarrow \infty} 
\frac{|A \cap \{1,2,\dots,n\}|}{n}$.

\begin{theorem}[Szemer\'edi's theorem]
Any subset $S \subseteq \mathbb{Z}^+$ with $\bar{d}(S)>0$  contains arbitrarily long arithmetic progressions.
\end{theorem}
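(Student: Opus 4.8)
The plan is to take the ergodic-theoretic route, which sits squarely within the measure-theoretic framework of this section and converts the combinatorial assertion into a recurrence phenomenon. The first move is the \emph{Furstenberg correspondence principle}: given $S \subseteq \mathbb{Z}^+$ with $\bar d(S) = \delta > 0$, I would manufacture a measure-preserving system $(X,\mathcal{B},\mu,T)$ together with a set $A \in \mathcal{B}$ satisfying $\mu(A) \ge \delta$, in such a way that the existence of a $k$-term arithmetic progression in $S$ is forced by the positivity of
\[
\mu\bigl(A \cap T^{-n}A \cap T^{-2n}A \cap \cdots \cap T^{-(k-1)n}A\bigr)
\]
for some $n \ge 1$. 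Concretely, one takes $X = \{0,1\}^{\mathbb{Z}}$ with the shift $T$, lets $A$ be the cylinder $\{x : x_0 = 1\}$, encodes (the indicator of) $S$ as a point $\omega \in X$, and extracts $\mu$ as a weak-$*$ limit, along a sequence realizing the upper density, of the empirical averages of the point masses along the forward orbit of $\omega$; a short counting argument then shows that positive measure of the displayed intersection pulls back to a genuine progression inside $S$.

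With the problem transported to ergodic theory, the second and substantive step is \emph{Furstenberg's multiple recurrence theorem}: for every measure-preserving system and every $A$ with $\mu(A)>0$, there is some $n \ge 1$ (indeed a syndetic set of such $n$) making the displayed intersection of positive measure. I would prove this through the Furstenberg--Zimmer structure theory. One first disposes of two extreme cases: \emph{weakly mixing} systems, where the correlations factor asymptotically and the problem collapses to the trivial single-set estimate; and \emph{compact} (Kronecker) systems, where $A$ and its translates almost return to themselves by an equicontinuity and pigeonhole argument. The general case is then handled by building the maximal tower of factors interpolating between the trivial factor and the whole system, and showing that the Szemer\'edi recurrence property passes upward through compact extensions, through weakly mixing extensions, and through inverse limits of such towers.

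The hard part will be precisely this lifting of the recurrence property through relative compact and weakly mixing extensions, together with the verification that the structure theorem actually terminates. The weakly mixing extension step requires a relative van der Corput estimate and careful control of the conditional expectations onto the factor, while the compact extension step requires a relativized almost-periodicity argument; stitching these so that the property survives the transfinite induction up the tower is where essentially all of the difficulty of Szemer\'edi's theorem is concentrated. An alternative I would keep in reserve is the Fourier-analytic density-increment method: prove the case $k=3$ by showing that a progression-free set of density $\delta$ has an inflated Fourier coefficient and hence a density increment on a subprogression, then iterate to a contradiction; for general $k$ this becomes Gowers' program built on the uniformity norms $U^{k-1}$, whose governing inverse theorem is the corresponding bottleneck.
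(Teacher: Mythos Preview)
The paper does not give a proof of Szemer\'edi's theorem at all: it is stated as a cited result from \cite{Sze}, and the only commentary offered is that Szemer\'edi's original argument, while ``elementary,'' is notoriously intricate (the 24-vertex, 36-edge flowchart remark). A few paragraphs later the paper does sketch exactly the Furstenberg correspondence principle you describe, and records that Furstenberg's multiple recurrence theorem yields an ergodic proof of Szemer\'edi, but this is presented as narrative background for the ergodic approach to the $2$-Large Conjecture rather than as a self-contained proof.

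Your outline is a faithful and correct high-level map of Furstenberg's proof: the correspondence principle as you state it (symbolic shift, cylinder set, weak-$*$ limit of empirical measures along a density-realizing sequence) is right, and the structure-theoretic induction through compact and weakly mixing extensions is the standard route to multiple recurrence. You also correctly flag where the genuine difficulty lies. So nothing is wrong with your plan; it simply goes far beyond what the paper attempts, since the paper treats Szemer\'edi's theorem as a black box and never proves it.
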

Szemer\'edi's proof has been called elementary, but it is anything but easy, straightforward, or simple.  In fact, contained with his proof is
a logical flow chart on 24 vertices with 36 directed edges that furnishes the reader with an overview of the intricate web of logic
used to prove the seminal result.

So, how do we mesh this result with $2$-large sets?  Since every $2k$-term arithmetic progression with common difference $d$
contains a $k$-term arithmetic progression with common difference $2d$, we have large sets with positive density (the set
of even positive integers).  A result in \cite{BGL} shows that $\{10^n: n \in \mathbb{Z}^+\}$ is not $2$-large, so we have
sets with 0 density that are not $2$-large.  Perhaps there is a density condition that distinguishes
large and non-large sets.
Unfortunately, further exploration shows this is not true.

We can have sets with positive
upper density that are not $2$-large and we can have sets with zero upper density
that are $2$-large.  To this end, first consider the set of odd integers $D_1$.  Coloring
$\mathbb{Z}^+$ by alternating red and blue, we do not even have a monochromatic $2$-term
$D_1$-ap. Hence, $D_1$ has positive
density but is not $2$-large.  Now consider the set
of squares $D_2$. As a very specific case of a far reaching extension of Szemer\'edi's result,
Bergelson and Liebman \cite{BL} have shown that $D_2$ is large.   More generally
(but still not as general as the full theorem),
Bergelson and Liebman proved the following result.

\begin{theorem}[Bergelson and Liebman] Let
$p(x): \mathbb{Z}^+ \rightarrow \mathbb{Z}^+$ be a polynomial with $p(0)=0$.
Then the set $D=\{p(i): i \in \mathbb{Z}^+\}$
is large.  More precisely, any subset of $\mathbb{Z}^+$ of
positive upper density contains arbitrarily long
$D$-aps.\label{BandL}
\end{theorem}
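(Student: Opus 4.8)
\subsection*{Proof proposal}

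The plan is to deduce both assertions from the polynomial Szemer\'edi theorem of Bergelson and Leibman, of which the displayed statement is essentially the single-polynomial specialization. First I would dispatch the passage from the density statement to largeness, which is the soft direction: given any $r$-coloring of $\mathbb{Z}^+$, subadditivity of upper density forces $1 = \bar d(\mathbb{Z}^+) \leq \sum_{c} \bar d(S_c)$, so some color class $S$ satisfies $\bar d(S) \geq 1/r > 0$; applying the density conclusion to $S$ then yields a monochromatic $k$-term $D$-ap. It therefore suffices to prove that every $S$ with $\bar d(S) > 0$ contains, for each $k$, a configuration $a, a+p(n), a+2p(n), \dots, a+(k-1)p(n) \subseteq S$ for some $n \in \mathbb{Z}^+$.

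To obtain this, I would set $p_j(x) = j\,p(x)$ for $j = 0, 1, \dots, k-1$. Each $p_j$ is integer-valued with $p_j(0) = 0$, and a common argument $n$ turns the configuration $\{a + p_j(n)\}_j$ into exactly a $k$-term $D$-ap with common difference $p(n) \in D$ (note $p$ is non-constant, as $p(0)=0$ and $D \subseteq \mathbb{Z}^+$, so the $p_j$ are distinct). Thus the target reduces to the following: for any finite family of integer polynomials vanishing at $0$ and any $S$ of positive upper density, there exist $a$ and $n$ with $a + p_j(n) \in S$ for all $j$. This is where the real work lies.

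The engine for that statement is ergodic. Via Furstenberg's correspondence principle I would manufacture an invertible measure-preserving system $(X,\mathcal{B},\mu,T)$ and a set $A$ with $\mu(A) = \bar d(S) > 0$ satisfying
\[
\bar d\Big(\bigcap_{j=0}^{k-1}\big(S - p_j(n)\big)\Big) \;\geq\; \mu\Big(\bigcap_{j=0}^{k-1} T^{-p_j(n)} A\Big)
\]
for every $n$. It then suffices to exhibit a single $n$ for which the right-hand intersection has positive measure, since then the left-hand intersection is nonempty and any of its elements supplies the required $a$. Such an $n$ is furnished by the polynomial multiple recurrence theorem, which asserts
\[
\liminf_{N\to\infty} \frac{1}{N} \sum_{n=1}^{N} \mu\Big(\bigcap_{j=0}^{k-1} T^{-p_j(n)} A\Big) \;>\; 0,
\]
forcing positivity of infinitely many terms.

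The hard part is proving this recurrence estimate, and here I would follow the PET (polynomial exhaustion technique) induction of Bergelson and Leibman. The idea is to run a van der Corput difference argument that, at each stage, replaces the family $\{p_j\}$ by one of strictly lower ``weight'' (the differences $p_j(n+h) - p_\ell(n)$ have smaller degree or fewer top-order terms), so that the induction terminates either at constant families or at a genuinely linear configuration handled by Furstenberg's original multiple recurrence theorem. Making this rigorous requires controlling the relevant averages through their characteristic factors and verifying that the successive van der Corput passes correctly manage the simultaneous presence of several polynomials of differing degrees. This bookkeeping, rather than any single inequality, is the crux and the genuinely delicate step; once the recurrence is in hand, chaining the three reductions above closes the proof.
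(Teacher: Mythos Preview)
The paper does not actually prove this theorem: it is stated as a result of Bergelson and Leibman and attributed to \cite{BL}, with no argument given beyond the citation. So there is no in-paper proof to compare against.

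Your sketch is a correct outline of the original Bergelson--Leibman route: reduce largeness to the density statement by pigeonhole on upper density, specialize the polynomial Szemer\'edi theorem to the family $p_j(x)=j\,p(x)$, pass through Furstenberg's correspondence principle to a measure-preserving system, and invoke polynomial multiple recurrence, whose proof is the PET induction. That is exactly the content of \cite{BL} as it applies here, so your proposal matches the source the paper defers to.
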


In quick order we have seen that distinguishing large and non-large sets solely
by their densities is not the correct approach.  
However, the proof of
Theorem \ref{BandL} leads us to our next approach.

\subsection{Measure-theoretic Ergodic Approach} Closely related to the above approach is the use of ergodic systems.  The connection between Szemer\'edi's Theorem
and ergodic dynamical systems is provided by Furstenberg's correspondence principle \cite{Fur}, which uses the following notations.
We remark here that we are specializing all  results to the integers and that the
stated results do not necessarily hold in different ambient spaces; see, e.g., \cite{BM}.

\begin{notation} For $S \subseteq \mathbb{Z}^+$ and $n \in \mathbb{Z}$, we let $S-n = \{s-n: s \in S\}$.
For the remainder of the article, we reserve the symbol $T$ for the shift operator that acts on $\mathcal{X}$, the
family of infinite sequences $x=(x_i)_{i \in \mathbb{Z}}$, by $Tx_n = x_{n+1}$.

\end{notation}

\begin{theorem}[Furstenberg's Correspondence Principle]
Let $E \subseteq \mathbb{Z}^+$ with $\bar{d}(E)>0$.  Then, for any $k \in \mathbb{Z}^+$, there exists
a probability measure-preserving dynamical system $(\mathcal{X},\mathcal{B},\mu,T)$
with a set $A \in \mathcal{B}$ such that $\mu(A) = \bar{d}(E)$ and
$$
\bar{d}\!\left(\bigcap_{i=0}^k (E-in)\right)  \!\geq \!\mu\left(
\bigcap_{i=0}^k T^{-in}A\right)
$$
for any $n \in \mathbb{Z}^+$.
\label{Furst}
\end{theorem}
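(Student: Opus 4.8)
The plan is to use symbolic dynamics: encode the set $E$ as a single point in a shift space and then manufacture the invariant measure as a weak-$*$ limit of orbital averages. Concretely, realize $\mathcal{X} = \{0,1\}^{\mathbb{Z}}$ with the product topology, so that $\mathcal{X}$ is compact and metrizable and the shift $T$ is a homeomorphism; let $\mathcal{B}$ be its Borel $\sigma$-algebra, and define the point $\xi \in \mathcal{X}$ by $\xi_i = 1$ if $i \in E$ and $\xi_i = 0$ otherwise. Take $A = \{x \in \mathcal{X} : x_0 = 1\}$, a cylinder set which is clopen. The whole argument rests on one bookkeeping identity: since $(T^m\xi)_i = \xi_{i+m}$ and $x \in T^{-i}A \iff x_i = 1$, we have
$$T^m\xi \in \bigcap_{i=0}^{k} T^{-in}A \iff \{m, m+n, \dots, m+kn\} \subseteq E \iff m \in \bigcap_{i=0}^{k}(E-in).$$
Thus membership of the orbit point $T^m\xi$ in the clopen set $B_n := \bigcap_{i=0}^k T^{-in}A$ is literally equivalent to $m$ lying in the set $F_n := \bigcap_{i=0}^k(E-in)$ whose density we want to bound.

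Next I would build $\mu$. Choose integers $N_j \to \infty$ realizing the upper density, i.e. $\frac{|E \cap [1,N_j]|}{N_j} \to \bar{d}(E)$, and set $\mu_j = \frac{1}{N_j}\sum_{m=1}^{N_j}\delta_{T^m\xi}$. These are probability measures on the compact space $\mathcal{X}$, so by weak-$*$ compactness I may pass to a subsequence along which $\mu_j \to \mu$. The standard telescoping estimate $\|T_*\mu_j - \mu_j\| \le 2/N_j \to 0$ shows the limit $\mu$ is $T$-invariant, giving the measure-preserving system $(\mathcal{X},\mathcal{B},\mu,T)$. Because $A$ is clopen, $\mathbf{1}_A$ is continuous, so $\mu(A) = \lim_j \mu_j(A) = \lim_j \frac{|E\cap[1,N_j]|}{N_j} = \bar{d}(E)$, which is the required normalization.

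Finally, the inequality. The key point is that $\mu$ is constructed independently of $n$, so a single system serves all $n$ simultaneously. Fix $n$; since $B_n$ is a finite intersection of clopen sets it is clopen, hence $\mu(B_n) = \lim_j \mu_j(B_n)$, and by the dictionary identity $\mu_j(B_n) = \frac{|F_n \cap [1,N_j]|}{N_j}$. Therefore
$$\mu\left(\bigcap_{i=0}^{k}T^{-in}A\right) = \lim_j \frac{|F_n \cap [1,N_j]|}{N_j} \le \limsup_{N\to\infty}\frac{|F_n \cap [1,N]|}{N} = \bar{d}\left(\bigcap_{i=0}^{k}(E-in)\right),$$
as claimed. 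The reason we get an inequality rather than an equality is exactly this last step: the sequence $(N_j)$ was chosen to realize the $\limsup$ for $E$ itself, but it need not realize the $\limsup$ for each shifted intersection $F_n$, so along $(N_j)$ the averages $\mu_j(B_n)$ can only be bounded above by $\bar{d}(F_n)$.

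I expect the main obstacle to be not computational but a matter of getting the topological setup exactly right: one must ensure that $A$ and \emph{every} $B_n$ are clopen, so that their indicators are continuous and weak-$*$ convergence genuinely transfers the orbital averages to $\mu$-measures, and one must check that the single limit measure $\mu$ is adequate for all $n$ at once. Once the dictionary identity and the clopenness are in hand, the invariance and the density computation are routine.
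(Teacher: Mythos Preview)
Your argument is correct and is in fact the standard proof of the correspondence principle: encode $E$ as a point in the Bernoulli shift $\{0,1\}^{\mathbb{Z}}$, take $A$ to be the clopen cylinder $\{x:x_0=1\}$, pass to a weak-$*$ limit of the orbital averages $\frac{1}{N_j}\sum_{m=1}^{N_j}\delta_{T^m\xi}$ along a sequence realizing $\bar d(E)$, and use that each $B_n=\bigcap_{i=0}^k T^{-in}A$ is clopen so that $\mu(B_n)=\lim_j\mu_j(B_n)\le\bar d(F_n)$. All the checkpoints you flag---invariance via telescoping, the normalization $\mu(A)=\bar d(E)$, and the fact that a single $\mu$ serves every $n$ (and indeed every $k$) simultaneously---are handled correctly.

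There is nothing to compare against in the paper itself: the theorem is stated there without proof and attributed to Furstenberg \cite{Fur}, so the paper's ``proof'' is simply a citation. Your write-up is exactly the argument one finds in standard references (e.g., Furstenberg's book or expository accounts of ergodic Ramsey theory), and in fact gives slightly more than the paper's formulation, since your system $(\mathcal{X},\mathcal{B},\mu,T)$ does not depend on $k$.
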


The above result can be viewed as the impetus for ergodic Ramsey theory as a field of research.
Furstenberg proved that there exists $d \in \mathbb{Z}^+$ such that,
for any $A$ with $\mu(A)>0$ we have
$$\mu\left(A \cap T^{-d}\!A \cap T^{-2d}\!A \cap \!\cdots \! \cap T^{-kd}\!A\right) > 0.$$
By Theorem \ref{Furst}, we have $E \cap (E-d) \cap (E-2d) \cap \!\cdots \!\cap (E-kd) \neq \emptyset$.
Hence, by taking $a$ in this intersection,
we have  $\{a,a+d,a+2d,\dots,a+kd\} \subseteq E$. 
Consequently, Furstenberg provided an ergodic proof of Szemer\'edi's theorem\footnote{Furstenberg used Banach upper density
and not upper density}.

Having followed this path it seems we have hit another dead end in our journey;
there appears to be no mechanism for controlling the number of colors in these arguments.
Perhaps a non-measure-theoretic dynamical system approach can help.

\subsection{Topological Dynamical Systems Approach} As ergodic systems are specific types of dynamical systems, the $2$-Large Conjecture may be
susceptible to the use of a different breed of dynamical
system, namely a topological one.

We will denote the space of infinite
sequences $(x_n)_{n \in \mathbb{Z}}$ with $x_i \in \{0,1,\dots,r-1\}$ by $\mathcal{X}_r$ and let $T$
remain the shift operator acting on $\mathcal{X}_r$.  Specializing to our situation, we
state Birkhoff's Multiple Recurrence Theorem 
due to Furstenberg and Weiss \cite{FW} (see also \cite{Bir}).

\begin{theorem}[Birkhoff's Multiple Recurrence Theorem] \label{th6} Let $k,r \in \mathbb{Z}^+$.
Under the product topology, for any open set $U \subseteq \mathcal{X}_r$ there exists  $d \in \mathbb{Z}^+$ so that
$
U \cap T^{-d}U \cap T^{-2d} \cap \cdots \cap T^{-kd}U \neq \emptyset.
$
\end{theorem}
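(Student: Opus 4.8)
The plan is to reduce the statement to a basic cylinder and then write down a point of the intersection explicitly. Because the cylinders form a base for the product topology on $\mathcal{X}_r$, and because a nonempty intersection for any cylinder $V\subseteq U$ forces one for $U$ (since $\bigcap_{j} T^{-jd}V\subseteq\bigcap_{j} T^{-jd}U$), it suffices to prove the claim when $U$ is itself a nonempty cylinder $U=\{x\in\mathcal{X}_r: x_i=a_i \text{ for } |i|\le M\}$ cut out by a fixed pattern $(a_i)_{|i|\le M}$ on the window $[-M,M]$. For such a $U$ the condition $T^{jd}x\in U$ unwinds, using $(T^{jd}x)_i=x_{i+jd}$, to the requirement that $x$ display the pattern on the translated window $[jd-M,\,jd+M]$.

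First I would set $d=2M+1$, which makes the $k+1$ windows $[jd-M,\,jd+M]$, $j=0,1,\dots,k$, pairwise disjoint. I would then define a sequence $x\in\mathcal{X}_r$ by placing the pattern on each of these windows, i.e.\ $x_{jd+i}=a_i$ for $|i|\le M$ and $0\le j\le k$, and assigning arbitrary symbols to all remaining coordinates; the assignment is consistent precisely because the windows do not overlap. By construction $x\in U$ and $T^{jd}x\in U$ for every $j\le k$, so $x$ lies in $U\cap T^{-d}U\cap\cdots\cap T^{-kd}U$ and the intersection is nonempty. There is essentially no obstacle here: the whole point is that the full shift imposes no constraint on which finite patterns may appear, so we are free to paste the target pattern into disjoint windows at will.

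The difficulty is therefore entirely displaced, and it is worth saying where it goes, since that is exactly the setting the $2$-Large Conjecture cares about. The theorem becomes substantive once $\mathcal{X}_r$ is replaced by the orbit closure $X=\overline{\{T^n\chi : n\in\mathbb{Z}\}}$ of a point $\chi\in\mathcal{X}_r$ encoding a coloring, for then the admissible patterns are dictated by $\chi$ and free pasting is impossible. The mechanism that survives is the following derivation from van der Waerden's Theorem: cover $X$ by finitely many open sets $V_1,\dots,V_m$ of diameter less than $\epsilon$ in a metric $\rho$ inducing the topology, fix any $x\in X$, and color each $n\in\mathbb{Z}^+$ by an index $c(n)$ with $T^n x\in V_{c(n)}$. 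Applying Theorem \ref{vdw} with $m$ colors and length $k+1$ produces a monochromatic progression $a,a+d,\dots,a+kd$, whence the point $y=T^a x$ satisfies $T^{jd}y\in V_{c}$ for a single color $c$ and all $j\le k$, so $\rho(T^{jd}y,y)<\epsilon$ for every $j$. Letting $\epsilon\to 0$ and passing to a convergent subsequence by a standard compactness argument yields a genuinely multiply recurrent point. I expect the real obstacle to live precisely in this step: van der Waerden hands us a monochromatic progression but lets us choose neither its color nor its starting point, so there is no mechanism for forcing the recurrence to occur inside a prescribed open set $U$ of a general subsystem. That missing control over colors is, in miniature, the very reason the dynamical approach does not by itself settle the conjecture.
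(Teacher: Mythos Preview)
Your proof is correct. The paper does not supply its own proof of this statement---it is quoted as Birkhoff's result and immediately used to derive van der Waerden's Theorem---so there is no argument in the paper to compare against. You are right that, as literally formulated on the full shift $\mathcal{X}_r$, the claim is essentially trivial: cylinders form a base, and on a full shift one may paste any prescribed finite pattern into pairwise disjoint windows at will; your choice $d=2M+1$ and the explicit construction of $x$ are clean and correct.

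Your second half is also well taken and is the more interesting observation. The substantive content of multiple recurrence lives on closed invariant subsystems (orbit closures, minimal systems), where free pasting is unavailable; your sketch of the standard derivation from van der Waerden's Theorem in that setting is accurate, including the honest caveat that the argument hands back a recurrent point but gives no control over which open set it lands in. That missing control over colors is exactly the obstruction the paper is circling.
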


Furstenberg's and Weiss' result allowed them to give a new proof
of van der Waerden's Theorem: Define a metric for $x,y \in \mathcal{X}_r$ by
$$
d(x,y) = \left(\min_{i \in \mathbb{Z}^+} x(i) \neq y(i)\right)^{-1},
$$
where $x(i)$ denotes the value/color of the $i^{\mathrm{th}}$ positive term in $x$.
A small $d(x,y)$ means we have value/color agreement in the initial terms of $x$ and $y$.
Let $x\in \mathcal{X}_r$ be the sequence corresponding to any given arbitrary $r$-coloring of $\mathbb{Z}^+$.
Theorem \ref{th6} helps prove that there exists  $y \in \{T^mx\}_{m \in \mathbb{Z}^+}$ such that
all of $d(y,T^dy), d(y,T^{2d}y),\dots,  d(y,T^{kd}y)$
are  less than $1$ for some $d$.  
Hence,
$y, T^dy, T^{2d}y, \dots, T^{kd}y$ all have the same first value/color. Since
$y=T^{a}x$ for some $a$ we have $x_a, x_{a+d}, x_{a+2d},\dots,x_{a+kd}$ all
of the same value/color, meaning that $a, a+d,\dots,a+kd$ is a monochromatic
arithmetic progression.

\begin{remark} We can actually have a guarantee that all of $d(y,T^dy),\break d(y,T^{2d}y),\dots, d(y,T^{kd}y)$
are less than any $\epsilon > 0$; however, this is not needed to prove van der Waerden's Theorem.
It does provide for some very interesting results like arbitrary long progressions all
with the same common difference 
each starting in a set of arbitrarily long consecutive intervals.  It should also be remarked
that this latter result can be shown combinatorially, too.
\end{remark}

So how can we use this to attack the $2$-Large Conjecture?  Given a $2$-large set $D$,
we have a guarantee that over the space $\mathcal{X}_2$ there exists $y \in \{T^mx\}_{m \in \mathbb{Z}^+}$ such that
all of $d(y,T^dy), d(y,T^{2d}y),\dots, d(y,T^{kd}y)$ are  less than $1$ for
some $d \in D$.  Our goal is to prove that this criterion implies the same
over the space $\mathcal{X}_r$.

Although Remark 7 states that all of $d(y,T^dy), d(y,T^{2d}y), \dots,\break d(y,T^{kd}y)$
can be arbitrarily small,
we can only guarantee they are less than $1$ (with our given metric) if we require $d$ from a $2$-large
set.  Hence, we could convert an $r$-coloring to a binary equivalent $2$-coloring
if we discovered a result that a long enough $(r-1)$-colored
$D$-ap admits a monochromatic $k$-term $D$-ap (we have no such result, but this idea will prove fruitful in Section 6).

\section{Back Where We Started}
Presently, it seems we have ended up back where we started.  Fittingly, this recurrence phenomenon is a key notion in dynamical
systems.
Momentarily, before getting to the Stone-\v{C}ech compactification, we'll have a diagram to aid in visualizing how  the different types
of results based on the above approaches relate to each other. 

In Figure 1, we give implications between
the types of recurrence we have considered thus far, followed by their definitions.  
\begin{figure}[h]
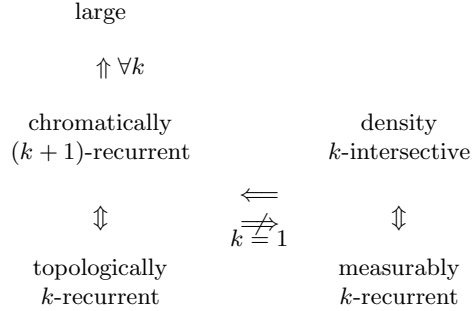
 \large
$$
\begin{array}{ccccccccc}
&\mbox{large}\\[5pt]
&\hskip 15pt\Big\Uparrow{\mbox{\small$\forall k$}}\\[5pt]
&\mbox{chromatically}&\Longleftarrow&\mbox{density}\\
&\mbox{$(k+1)$-recurrent}&{\hskip 5pt{\not} \hskip-7pt\Longrightarrow}&\mbox{$k$-intersective}\\[-3pt]
&&\mbox{\small{$k=1$}}&\\[-5pt]
&
\Big\Updownarrow&&\Big\Updownarrow\\[2pt]
&\mbox{topologically}&&\mbox{measurably}\\
&\mbox{$k$-recurrent}&&\mbox{$k$-recurrent}\\
\end{array}
$$
\caption{Relationship between types of recurrence considered thus far}
\end{figure}

All of the following definitions are given with respect to arithmetic progressions over the integers;
as such, some of the definitions are specific cases of more general definitions.
Some of the implications above fail in more general settings.

\begin{definitions} Let $r \in \mathbb{Z}^+$.  Denote the set of infinite
sequences $(x_n)_{n \in \mathbb{Z}}$ with $x_i \in \{0,1,\dots,r-1\}$ by $\mathcal{X}_r$ and let $T$
be the shift operator acting on $\mathcal{X}_r$.
For $D \subseteq \mathbb{Z}^+$, we say that $D$ is
\begin{itemize}
\item[](i) {\it chromatically $k$-recurrent} if, for any $r$, every
$r$-coloring of $\mathbb{Z}^+$ admits a monochromatic $k$-term arithmetic
progression  $a, a+d,\dots,a+(k-1)d$ with $d \in D$.  
\\
\item[](ii)
{\it topologically $k$-recurrent} if, for any $r$, the dynamical system $(\mathcal{X}_r,T)$
has the property that for
every open set $U \subseteq \mathcal{X}_r$ there exists $d \in D$ such that
$$
U \cap T^{-d}U \cap T^{-2d} \cap \cdots \cap T^{-kd}U \neq \emptyset.
$$
\\[-20pt]
\item[](iii)
{\it density $k$-intersective} if for every $A\subseteq \mathbb{Z}^+$ with $\bar{d}(A)>0$,
there exists $d \in D$ such that
$$
A \cap (A-d) \cap (A-2d) \cap \cdots \cap (A-kd) \neq \emptyset.
$$
\\[-20pt]
\item[](iv) {\it measurably $k$-recurrent} if for any probability measure-preserving dynamical system $(\mathcal{X}_r,\mathcal{B},\mu,T)$
and any $A \in \mathcal{B}$ with $\mu(A)>0$ there exists $d \in D$ such that 
$$\mu\left(A \cap T^{-d}\!A \cap T^{-2d}\!A \cap \!\cdots \! \cap T^{-kd}\!A\right) > 0.$$
\end{itemize}
\end{definitions}

The fact that the double implications shown in Figure 1 are true has already been partially discussed;
see \cite{Jun} for details on the left double implication.
The top-most implication is the definition of large. The negated implication was proved by
K\v{r}\'i\v{z} \cite{Kriz} with nice write-ups by Jungi\'c \cite{Jun} and McCutcheon \cite{McC}, while the remaining implication comes from
the fact that any finite coloring of $\mathbb{Z}^+$ contains a color class of positive
upper density.

The negated implication offers a bit of insight -- a tiny flashlight for our travels, if you will.
The set used to prove this negation is
the set $C'$ given in \cite{McC}:
$$
C' = \{c \in \{0,1,\dots,M\}:  c \!\!\!\! \pmod{p_i} = \pm 1 \mbox{ for at least $2r$ indices $i$}\}
$$
where $r$ is sufficiently large, $p_1, p_2,\dots,p_{2r+2}$ are sufficiently
large primes, and $M=\prod_{i=1}^{2r+2} p_i$.
We do not know if   $C'$   is $2$-large/large or not (this seems to be a
difficult problem in-and-of itself), but if it is then there exists a set of
positive upper density that does not contain a $C'$-ap.
We should take this uncertainty as a warning that we have no guarantee a large set $D$ has its
$D$-aps lie inside a color class with positive upper density, even though
Szemer\'edi's Theorem assures us that arbitrarily long arithmetic progressions are there.

\vskip 20pt
\section{Stone-\v{C}ech Compactification Approach} Although the  three approaches in Section 3 all have nice links between them, the
approach championed by Bergelson, Hindman, Strauss, and others is quite disparate from the previous methods we have seen.  The approach is a blend of
set theory, topology, and algebra.  We'll start by describing the points in the Stone-\v{C}ech compactification
on $\mathbb{Z}^+$, which requires the following definition (again, specialized to the positive integers).

\begin{definition}[filter, ultrafilter]  Let $p$ be a family of subsets of $\mathbb{Z}^+$ (this lowercase $p$ is the standard notation
in this field).  If $p$ satisfies all of
\begin{itemize}
\item[](i) $\emptyset \not\in p$;
\item[](ii) $A \in p$ and $A \subseteq B \Rightarrow B \in p$; and
\item[](iii) $A,B \in p \Rightarrow A \cap B \in p$,
\end{itemize}
then we say that $p$ is a {\it filter}.  If, in addition, $p$ satisfies
\begin{itemize}
\item[](iv) for any $C \subseteq \mathbb{Z}^+$ either $C \in p$ or $C^c=\mathbb{Z}^+ \setminus C \in p$
\end{itemize}
then we say that $p$ in an {\it ultrafilter}.  (Item (iv) means that $p$ is not properly contained in 
any other filter.)
\end{definition}

\begin{example} The set of subsets $\mathcal{F}=\{A \subseteq \mathbb{Z}^+: |\mathbb{Z}^+ \setminus A | < \infty\}$ is a filter but not
an ultrafilter (it is known as the Fr\'echet filter).  It is not an ultrafilter since, taking $C$ from (iv) above to be the set of even positive integers we see
that neither $C$ nor its complement is in $\mathcal{F}$.  The set $\mathcal{G} = \{A \subseteq \mathbb{Z}^+: x \in A\}$ for
any fixed $x \in \mathbb{Z}^+$ is an ultrafilter.
\end{example}

\begin{remark*} One hint that the ultrafilter direction may not prove useful is that the
family of large sets is not a(n) (ultra)filter.  Parts (i), (ii), and (iv) of the
ultrafilter definition are satisfied, but part (iii) is not.
To see this, consider $A = \{i^3: i \in \mathbb{Z}^+\}$ and $B = \{i^3+8: i \in \mathbb{Z}^+\}$.  These are both large sets (see \cite{BGL}); however,
$A \cap B = \emptyset$ (Fermat's Last Theorem serves as a very
useful result for counterexamples) and so cannot be large.
\end{remark*}

The  Stone-\v{C}ech compactification of $\mathbb{Z}^+$ is denoted by $\beta\mathbb{Z}^+$, and
the points in $\beta\mathbb{Z}^+$ are the ultrafilters,
i.e., $\beta\mathbb{Z}^+ = \{p: \mbox{$p$ is an ultrafilter}\}$.  Having the space set, we need to define
addition in $\beta\mathbb{Z}^+$.

\begin{definition}[addition in $\beta\mathbb{Z}^+$]
Let $A \subseteq \mathbb{Z}^+$ and let $p,q \in \beta\mathbb{Z}^+$.  As before,
$A-x =\{y \in \mathbb{Z}^+: y+x \in A\}$.  We define the addition of
two ultrafilters by
$
A \in p+q \Longleftrightarrow \{x \in \mathbb{Z}^+: A-x \in p\} \in q.
$
\end{definition}

The link between $r$-colorings and ultrafilters is provided by the following lemma.

\begin{lemma} Let $r \in \mathbb{Z}^+$ and let $p \in \beta\mathbb{Z}^+$.
For any $r$-coloring of $\mathbb{Z}^+$, one of the color classes is in $p$. \label{9}
\end{lemma}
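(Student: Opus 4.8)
The plan is to use the defining properties of an ultrafilter, primarily the finite-additivity-style property that an ultrafilter must "choose" one side of every partition. Recall that an $r$-coloring $\chi:\mathbb{Z}^+\to\{0,1,\dots,r-1\}$ partitions $\mathbb{Z}^+$ into color classes $C_0,C_1,\dots,C_{r-1}$, where $C_j = \chi^{-1}(j)$; these are pairwise disjoint and their union is all of $\mathbb{Z}^+$. The goal is to show that at least one $C_j$ belongs to $p$. First I would argue the case $r=2$ directly from property (iv) of the ultrafilter definition: since $C_1 = \mathbb{Z}^+\setminus C_0 = C_0^c$, property (iv) applied to $C = C_0$ says that either $C_0\in p$ or $C_0^c = C_1 \in p$, which is exactly the claim.

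For general $r$ the key step is to establish the finitely-additive disjunction: if $A\cup B\in p$ then $A\in p$ or $B\in p$. I would prove this by contradiction using the filter axioms. Suppose $A\notin p$ and $B\notin p$. By ultrafilter property (iv), $A\notin p$ forces $A^c\in p$, and similarly $B^c\in p$. Then by filter property (iii) (closure under intersection), $A^c\cap B^c\in p$. But $A^c\cap B^c = (A\cup B)^c$, so $(A\cup B)^c\in p$. Since we assumed $A\cup B\in p$, filter property (iii) would give $(A\cup B)\cap(A\cup B)^c = \emptyset \in p$, contradicting property (i). Hence $A\in p$ or $B\in p$.

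Having the two-set disjunction in hand, I would extend it to $r$ sets by a straightforward induction on $r$: write $C_0\cup C_1\cup\cdots\cup C_{r-1} = \mathbb{Z}^+\in p$ (which is in $p$ since $\mathbb{Z}^+ = \emptyset^c$ and $\emptyset\notin p$ forces $\mathbb{Z}^+\in p$ by property (iv), or simply because $p$ is nonempty and property (ii) pushes any member up to $\mathbb{Z}^+$). Applying the disjunction to the partition $C_0 \cup (C_1\cup\cdots\cup C_{r-1})$, either $C_0\in p$, in which case we are done, or the union of the remaining $r-1$ classes lies in $p$, and the inductive hypothesis finishes the argument.

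I do not anticipate a genuine obstacle here; the lemma is essentially a repackaging of the ultrafilter axioms, and the only point requiring a modicum of care is the finitely-additive disjunction, since it is the one place where all three filter axioms plus the maximality condition (iv) are used in concert. The cleanest exposition keeps the two-set case as a self-contained sublemma and then invokes induction, so that the colorful language of color classes never obscures the purely set-theoretic content.
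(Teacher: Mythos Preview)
Your proposal is correct and uses essentially the same idea as the paper: apply property (iv) to pass to complements and property (iii) to intersect, reaching a contradiction with (i). The only cosmetic difference is that the paper argues directly---assuming no $C_i\in p$, it intersects $C_1^c,\dots,C_{r-1}^c$ in one stroke to obtain $C_r\in p$---whereas you isolate the two-set disjunction as a sublemma and then induct; the underlying logic is identical.
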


\begin{proof}
Let $\mathbb{Z}^+ = \cup_{i=1}^r C_i$, where $C_i$ is the $i^{\mathrm{th}}$ color class.
By part (iv) of the definition of ultrafilter, if we assume that none of the $C_i$'s are
in $p$, then each of their complements is in $p$.  Applying part (iii) of the definition
of a filter $r-2$ times, we have $\cap_{i=1}^{r-1} C_i^c \in p$.  But
$\cap_{i=1}^{r-1} C_i^c = C_r$ so $C_r \in p$, a contradiction.
\end{proof}

A   number of results that use $\beta\mathbb{Z}^+$ to give Ramsey-type results rely on
the existence of an additive idempotent in $\beta\mathbb{Z}^+$.  To this end,
if $p+p = p$ is such an element, then $A \in p+p = p$ means that $B=\{x \in \mathbb{Z}^+: A-x \in p\}$ is in $p$
as well.  Thus, we have $a \in A$ and $d \in B$ such that $a$ and $a-d$ are both in A.

Hence, since $p$ is a filter, by item (iii) above we have $A \cap B \in p$.  Hence, appealing to
Lemma \ref{9}, we can consider $A$ to be a color class in an $r$-coloring so that we
have a monochromatic solution to $x+y=z$ with $x=d$, $y=a-d$, and $z=a$.  This result
is known as {Schur's Theorem}.  

In order to obtain van der Waerden's Theorem
through the use of ultrafilters, quite a bit of algebra of $\beta\mathbb{Z}^+$ is needed, so
we will not present that here.  The interested reader should consult the sublime book
by Hindman and Strauss \cite{HS}.  Unfortunately for our goal, while reading through this
book it becomes quite clear that the number of colors used to state the Ramsey-type
results is irrelevant and can be kept arbitrary in the arguments.  So there does not seem a natural
way to control for the number of colors.  On the other hand, there are many
types of {\it largeness} that can be proved using ultrafilters.  
A good description of largeness via ultrafilters is given by Bergelson and Downarowicz in \cite{BD}.  An intuitive notion of largeness would be
that a certain property $P$ is large (in the general sense) if $P$
itself is partition regular or 
if every set with property $P$ is partition regular in some (possibly different) sense.

Perhaps one of
these types of largeness will help achieve our goal.  Figure 2, below, gives
a summary of how these different types are related; most 
of the chart is due to Bergelson and Hindman \cite{BH}.

\begin{figure}[h]
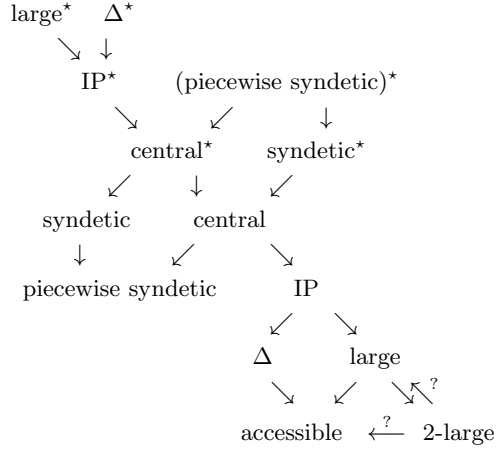
 \large
\begin{center} \hspace*{-55pt}
\begin{tabular}{ccccclcccc}
large$^\star$\hspace*{-55pt}&&$\Delta^\star$\\[2pt]
&$\searrow$\hspace*{-55pt}&\hspace*{-10pt}$\downarrow$\\[2pt]
&&\hspace*{-15pt}IP$^\star$&&&\hspace*{-55pt}(piecewise syndetic)$^\star$\\[2pt]
&&\hspace*{5pt}$\searrow$&&\hspace*{-45pt}$\swarrow$&\hspace*{0pt}$\downarrow$\\[2pt]
&&&\hspace*{-65pt}central$^\star$\hspace*{-12pt}&&\hspace*{-20pt}syndetic$^\star$ \\[2pt]
&&$\swarrow$&\hspace*{-35pt}$\downarrow$&$\swarrow$\\[2pt]
&&\hspace*{-25pt}syndetic&\hspace*{-20pt}central\hspace*{-10pt}\\[2pt]
&&$\downarrow$\hspace*{30pt}&\hspace*{-45pt}$\swarrow$&$\searrow$\\[2pt]
&&piecewise syndetic&&&\hspace*{-10pt}IP\\[2pt]
&&&&$\swarrow$&\hspace*{5pt}$\searrow$\\[2pt]
&&&$\Delta$\hspace*{-15pt}&& &\hspace*{-65pt}large\\[2pt]
&&&\hspace*{10pt}$\searrow$\hspace*{-20pt}&&\hspace*{5pt}$\swarrow$&\hspace*{-55pt}$\searrow$ \hspace*{-5pt}$\nwarrow$ \hspace*{-5pt}{$^?$}\hspace*{-20pt}\\[2pt]
&&&&&\hspace*{-30pt}accessible \hspace*{3pt} $\stackrel{?}\longleftarrow$ \hspace*{3pt}$2$-large\\[10pt]
\end{tabular}
\caption{Type of largeness and implications.  Missing implications are not true.}
\end{center}
\end{figure}

In defining the terms in Figure 2, we will start at the bottom and work our way up.
As we move up the chart, the concepts are all types of largeness that increase
in robustness.

\begin{definitions} Let $S \subseteq \mathbb{Z}^+$.  We say that $S$ is
\begin{itemize}
\item[](i) {\it accessible} if every $r$-coloring, for every $r$, admits arbitrarily long
progressions $x_1,x_2,\dots,x_n$ such that $x_{i+1}-x_i \in S$ for $i=1,2,\dots,n-1$.

\item[](ii) a {\it $\Delta$-set} if there exists $T \subseteq \mathbb{Z}^+$ such that $T-T \subseteq S$.

\item[](iii) an {\it IP-set} if there exists $T = \{t_i\} \subseteq \mathbb{Z}^+$ such that
$FS(T)=\{\sum_{f \in F} t_f: F \subseteq \mathbb{Z}^+ \mbox{ with } |F|<\infty\} \subseteq S$ (the
notation $FS(T)$  stands
for the {\it finite sums} of $T$).

\item[](iv) {\it piecewise syndetic} if there exists $r \in \mathbb{Z}^+$ such
that for any $n \in \mathbb{Z}^+$ there exists
$\{t_1<t_2<\cdots<t_n\} \subseteq S$ with $t_{i+1}-t_i \leq r$ for $1 \leq i \leq n-1$.

\item[](v) {\it syndetic} if there exists $r \in \mathbb{Z}^+$ such that
$S=\{s_1<s_2<\cdots\}$ satisfies $s_{i+1}-s_i \leq r$ for all $i \in \mathbb{Z}^+$.

\item[](vi) a {\it central set} if $S \in p$ where $p$ is an idempotent ultrafilter with
the property that for all $A \in p$, the set $\{n \in \mathbb{Z}^+: A+n \in p\}$ is syndetic.
(The original, equivalent, definition comes from Furstenberg (see \cite{Fur2}) in
the area of dynamical systems.)
\end{itemize}
\end{definitions}

The remaining categories in Figure 2 all have a $^{\star}$ on them.  This is the
designation for the {\it dual} property.  If $X$ is one of the non-starred properties in Figure 2,
then we say that a set $S$ is in $X^\star$ if $S$ intersects every set that
has property $X$.

All implications and non-implications that do not involve any property in
$\{\mbox{$2$-large, large, accessible, large$^\star$}\}$
are from \cite{BH}.  The fact that accessible does not imply large was
first shown by Jungi\'c \cite{Jun}, who provided a non-explicit accessible
set that is not $7$-large.  Recently, Guerreiro, Ruzsa, and Silva \cite{GRS}
provided an explicit accessible set that is not $3$-large. In the next section,
we give an explicit accessible set that is not $2$-large, explaining why
accessible does not imply $2$-large.  The fact that a $\Delta$-set is also
accessible is from \cite[Th. 10.27]{LR}, while the fact that large implies
accessible is straightforward.  The set of cubes is large (and, hence, accessible) but not
a $\Delta$-set. To see this, assume otherwise and let $\{s_i\}$ be
an increasing sequence of positive integers such that
$s_j - s_i$ is a cube for all $j>i$.  Then there exists
$t<u<v$ such that
$s_v - s_t, s_v - s_u,$ and $s_u-s_t$ are all
cubes.  But then we have $s_v-s_t = (s_v-s_u) + (s_u-s_t)$
as an integer solution to $z^3=x^3+y^3$, a contradiction.  Large$^\star$ implying IP$^\star$ comes from
the fact that all IP sets are large sets.  To see that IP$^\star$ does not
imply large$^\star$, let $D$ be the set of integer cubes. Then
$D \not \in $ IP (in fact, for any $x,y \in D$ we have $x+y \not \in D$)\footnote{I have a cute, short proof of this fact, but, like Fermat, there
is not enough room in the bottom margin here, especially given the length of this
lengthy, and unnecessary, footnote.}.  Hence,
any $A \in $ IP cannot be a subset of $D$, meaning that
$A \cap D^c \neq \emptyset$. Thus, $D^c$ intersects all IP-sets;
in other words, $D^c \in $ IP$^\star$.
By Theorem \ref{BandL}, we know that $D$ is large.
Now, because $D^c$ does not interesect
the large set $D$ we see that $D^c \not \in$ large$^\star$.

Investigating some of the properties in Figure 2, we have some interesting
results that could aid in proving or disproving the $2$-large
conjecture.

\begin{theorem}[Furstenberg and Weiss \cite{FW}] Let $k,r \in \mathbb{Z}^+$.  For any $r$-coloring of $\mathbb{Z}^+$,
for some color $i$, the set of common differences of monochromatic $k$-term
arithmetic progressions of color $i$ is in IP$^\star$. \label{IPstar}
\end{theorem}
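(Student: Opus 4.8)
The plan is to peel the dual notation off the statement, reduce everything to one well-chosen color class, and then feed that class into the algebra of $\beta\mathbb{Z}^+$. Write $C_0,\dots,C_{r-1}$ for the color classes of the given $r$-coloring, and for each $i$ let $D_i$ be the set of common differences of monochromatic $k$-term APs of color $i$; that is, $d\in D_i$ exactly when some $a$ satisfies $a,a+d,\dots,a+(k-1)d\in C_i$. By the definition of a dual property, $D_i\in$ IP$^\star$ means precisely that $D_i$ meets every IP-set. So it suffices to exhibit a single color $i$ such that for \emph{every} increasing sequence $\langle x_t\rangle_{t\ge 1}$ there is a $d\in FS(\langle x_t\rangle)$ occurring as the common difference of a $k$-term AP lying entirely inside $C_i$.

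The first key step pins down this color using idempotents. The semigroup $(\beta\mathbb{Z}^+,+)$ is compact and right-topological, so it has a minimal left ideal, and every minimal left ideal contains an idempotent; equivalently, there is an idempotent $p$ of the special (syndetic) type appearing in the definition of a central set. By Lemma~\ref{9}, one color class lies in $p$, say $C_i\in p$, and because $p$ is of that type, $C_i$ is central. I claim this $i$ works. Choosing a \emph{minimal} idempotent, rather than an arbitrary one, is exactly what lets me fix a single color once and for all: a per-ultrafilter pigeonhole over the $r$ colors would produce a color that depends on the ultrafilter, and such choices cannot be amalgamated into one.

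The second key step extracts the progressions from centrality via the Central Sets Theorem (see \cite{HS}). Let $F=FS(\langle x_t\rangle)$ be an arbitrary IP-set. Apply the Central Sets Theorem to the central set $C_i$ using the $k$ sequences $y_{j,t}=(j-1)\,x_t$ for $j=1,\dots,k$. It yields a sequence $\langle a_n\rangle$ in $\mathbb{Z}^+$ and finite index blocks $H_1<H_2<\cdots$ such that for every finite $F_0$ and every $j$ we have $\sum_{n\in F_0}\bigl(a_n+\sum_{t\in H_n}(j-1)x_t\bigr)\in C_i$. Taking the singleton $F_0=\{1\}$ and setting $a=a_1$ and $d=\sum_{t\in H_1}x_t$ gives $a+(j-1)d\in C_i$ for all $j=1,\dots,k$, i.e.\ a $k$-term AP of color $i$ with common difference $d$. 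Since $H_1$ is finite, $d\in FS(\langle x_t\rangle)=F$, so $d\in D_i\cap F$. As $F$ ranged over all IP-sets, $D_i$ meets every IP-set, hence $D_i\in$ IP$^\star$.

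The hard part is concentrated entirely in the two imported tools: the existence of minimal idempotents in $\beta\mathbb{Z}^+$ and, above all, the Central Sets Theorem, which is what simultaneously forces the difference $d$ into the prescribed IP-set and keeps the entire progression inside the one central color class. Everything else is bookkeeping. The only point I would verify carefully is the indexing in the Central Sets Theorem --- that the multiples $0,1,\dots,k-1$ of the single block sum $\sum_{t\in H_1}x_t$ all land in $C_i$ --- but this is precisely the $F_0=\{1\}$ instance of its conclusion, so no genuine obstacle remains once the machinery is invoked.
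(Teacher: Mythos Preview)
The paper does not actually prove this theorem; it is quoted from Furstenberg and Weiss \cite{FW} as a known result, with no argument supplied. So there is nothing in the paper to compare your proposal against line by line.

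That said, your argument is correct. Fixing a minimal idempotent $p\in\beta\mathbb{Z}^+$, picking the color class $C_i\in p$ via Lemma~\ref{9} so that $C_i$ is central, and then invoking the Central Sets Theorem with the $k$ sequences $y_{j,t}=(j-1)x_t$ is exactly how one derives this statement from the $\beta\mathbb{Z}^+$ machinery; the singleton $F_0=\{1\}$ case reads off a $k$-term progression in $C_i$ whose common difference $\sum_{t\in H_1}x_t$ lies in the prescribed $FS$-set. One cosmetic point: you need not assume $\langle x_t\rangle$ is increasing, and an IP-set only \emph{contains} some $FS(\langle x_t\rangle)$ rather than equals it, but since you show $D_i$ meets every $FS$-set this already forces $D_i$ to meet every IP-set.

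It is worth noting that your route is not the original one. Furstenberg and Weiss work in topological dynamics (IP-recurrence for minimal systems), whereas you go through the algebra of $\beta\mathbb{Z}^+$ and the Central Sets Theorem as in \cite{HS}. The trade-off is that your proof is short and modular but imports a theorem (Central Sets) that is itself at least as deep as the statement being proved; the topological-dynamics proof is more self-contained but longer to set up. Either way, the crucial observation you make---that one must choose a \emph{single} color class up front using a minimal idempotent, rather than pigeonholing per IP-set---is the right diagnosis of where the content lies.
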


Theorem \ref{IPstar} gives a very strong property for the set of common differences
in monochromatic arithmetic progressions.  Along these same lines, we have the following result.

\begin{theorem}[Bergelson and Hindman \cite{BH2}] For any $k \in \mathbb{Z}^+$ and any infinite subset of positive integers $A$, under any finite coloring of $\mathbb{Z}^+$
there exists a monochromatic $k$-term arithmetic progression with common difference
in $FS(A)$.
\end{theorem}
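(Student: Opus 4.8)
The plan is to deduce this statement as an immediate corollary of Theorem \ref{IPstar} (Furstenberg and Weiss), which we already have in hand. The one observation that makes everything work is that, for any infinite set $A = \{a_1 < a_2 < \cdots\} \subseteq \mathbb{Z}^+$, the collection $FS(A)$ of finite sums is itself an IP-set; indeed it is the canonical example of one, with $A$ playing the role of the witnessing sequence $T$ in the definition of an IP-set. Consequently the whole question collapses to showing that, for some color, the associated set of admissible common differences meets this one particular IP-set.

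So fix a finite $r$-coloring of $\mathbb{Z}^+$ together with a positive integer $k$. By Theorem \ref{IPstar} there is a color $i$ for which the set $D_i$ of common differences of monochromatic $k$-term arithmetic progressions of color $i$ belongs to IP$^\star$. Recalling the definition of the dual property --- a set lies in IP$^\star$ precisely when it intersects every IP-set --- and that $FS(A)$ is an IP-set, we obtain $D_i \cap FS(A) \neq \emptyset$. Any $d$ in this intersection is simultaneously the common difference of some monochromatic $k$-term progression of color $i$ and a member of $FS(A)$, which is exactly the assertion.

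The point worth emphasizing is that this argument does almost no work of its own: all of the difficulty has been front-loaded into Theorem \ref{IPstar}. Were one instead to prove the statement from scratch, the natural route would run through the algebra of $\beta\mathbb{Z}^+$. One would fix an idempotent ultrafilter $p$, invoke the Galvin--Glazer theorem (every member of $p$ contains a set of the form $FS(T)$) together with the dual characterization of IP$^\star$ as the family of sets lying in every idempotent ultrafilter, and then run a van der Waerden--type recurrence inside $p$ so as to produce the progression while confining its common difference to $FS(A)$. That combinatorial-algebraic step is where the genuine obstacle lives, and it is precisely what Theorem \ref{IPstar} has already packaged for us; the real difficulty sits upstream of this corollary rather than inside it.
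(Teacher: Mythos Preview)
Your derivation is correct, and in fact the paper does not supply its own proof of this theorem: it is merely cited from \cite{BH2}, with the remark that it lies ``along these same lines'' as Theorem~\ref{IPstar}. Your argument makes that remark precise --- $FS(A)$ is an IP-set by definition, so the IP$^\star$ set $D_i$ furnished by Theorem~\ref{IPstar} must meet it --- which is exactly the one-line deduction the paper's phrasing gestures at.
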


Unfortunately, neither of these last two theorems helps guide us out of
the rabbit hole, as generic $2$-large sets don't necessarily have any obvious structures.
So we now find ourselves moving into the darkest corner
of the hole: the combinatorics encampment.

\section{Some Combinatorial Results}

We start this section by providing an accessible set that is not $2$-large.  Had this result
not been possible, we would have disproved the $2$-Large Conjecture (since there exists an accessible
set that is not large) and been saved from the
depths of the rabbit hole.  But, alas, it was not meant to be.

\begin{theorem} There exists an accessible set that is not $2$-large.
\end{theorem}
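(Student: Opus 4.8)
The plan is to produce a single set $S$ and verify two opposite-looking properties: that $S$ is accessible (the positive Ramsey statement) and that $S$ is not $2$-large (a negative statement, witnessed by an explicit $2$-coloring). The whole game is played in the gap between the two definitions: accessibility asks only for long monochromatic \emph{chains} $x_1,\dots,x_n$ whose \emph{consecutive} differences lie in $S$, so the differences may vary from step to step, whereas $2$-largeness demands a monochromatic progression with a single fixed common difference $d\in S$. My strategy is to make $S$ rich enough that a chain can always ``dodge'' an adversarial coloring by changing its step size, while simultaneously engineering a coloring under which no \emph{fixed} step size survives for more than a bounded number of steps.

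For accessibility I would take $S \supseteq T-T$ for a rapidly increasing infinite set $T=\{t_1<t_2<\cdots\}$; that is, I make $S$ contain a $\Delta$-set. Accessibility is then immediate and robust against every coloring: given any $r$-coloring of $\mathbb{Z}^+$, the pigeonhole principle yields a color class containing infinitely many of the $t_j$, say $t_{j_1}<t_{j_2}<\cdots$, and the chain $t_{j_1},t_{j_2},\dots,t_{j_n}$ is monochromatic with consecutive differences $t_{j_{l+1}}-t_{j_l}\in T-T\subseteq S$. This is exactly the implication ``$\Delta$-set $\Rightarrow$ accessible'' and constitutes the easy half.

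The substance is to choose $T$ and a $2$-coloring $\chi$ so that $S=T-T$ fails to be $2$-large. I would let $T$ grow in tower fashion ($t_{j+1}/t_j\to\infty$), so that $S$ breaks into well-separated clusters, the cluster coming from index $j$ lying at scale $\approx t_j$. The coloring $\chi$ must be \emph{self-similar} across these scales rather than a crude scale-index coloring. Fixing a small target length $k$, I would then argue by induction on scale that no monochromatic $k$-term $S$-ap exists: a difference $d\in S$ sits at a definite scale $t_j$, a length-$k$ progression with difference $d$ occupies a window of size $(k-1)d$ that is negligible compared with the next scale $t_{j+1}$, and self-similarity of $\chi$ lets me rescale this window back to a base scale where blocking reduces to a finite, hand-checkable configuration; the rapid growth of $T$ keeps progressions that straddle a scale boundary under control.

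The hard part is precisely the coexistence of the two properties, and it shows up as a ``squeezing'' phenomenon. A fixed difference $d$ can be the common difference of a progression starting at an arbitrarily large $a$, where $d$ is tiny relative to $a$; so the coloring cannot merely separate scales (any coloring constant on scale-intervals makes such a squeezed progression monochromatic), but must forbid fixed-difference progressions simultaneously at \emph{every} location and \emph{every} scale. This is what forces a genuinely hierarchical $\chi$ and makes the carry/boundary bookkeeping delicate. The real obstacle, and the entire content of the theorem, is to confirm that making $S$ rich enough to be accessible (indeed a $\Delta$-set) does not already force it to be $2$-large; the tuning of the growth rate of $T$ against the target length $k$ so that the inductive blocking closes is the technical heart of the argument.
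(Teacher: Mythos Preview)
Your accessibility half is correct and is exactly what the paper does: take $T$ infinite, set $S=T-T$, and use pigeonhole on $T$ to get arbitrarily long monochromatic chains with consecutive differences in $S$. This is the implication $\Delta\Rightarrow\text{accessible}$ that the paper cites.

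The gap is in the second half. You correctly diagnose the obstruction---a coloring constant on scale-intervals fails because a tiny $d$ can sit inside an arbitrarily high window---and you correctly conclude that $\chi$ must be hierarchical. But you stop exactly where the content begins: you never say what $\chi$ is, and ``self-similar'' plus ``induction on scale'' is not yet an argument. In particular, your instinct to take $t_{j+1}/t_j\to\infty$ (tower growth) is pointed in the wrong direction. Wide scale separation makes the clusters of $S$ disjoint, but it does nothing to create a coloring that blocks a \emph{fixed} $d\in S$ at \emph{every} translate; the squeezing problem you identified is indifferent to how far apart the scales are.

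The paper's solution goes the opposite way: it takes $T=\{2^{4i}:i\ge 0\}$, so $t_{j+1}/t_j=16$ is \emph{constant}, and the point is not separation of scales but \emph{alignment with digit positions}. Every $d\in S$ has the form $2^{4s}-2^{4t}$, so adding $d$ to $n$ touches exactly two $4$-bit blocks of the binary expansion of $n$ (up to carries). The coloring is $\chi(n)=\sum_j m(I_j(n))\pmod 2$, where $I_j(n)$ is the $j$th $4$-bit block and $m:\{0,1\}^4\to\{0,1\}$ is a hand-built lookup table. The verification is then a finite case analysis on the pair $(I_t,I_s)$ showing that within any $25$ consecutive terms the parity must flip. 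So the missing idea is concrete: align $T$ with a fixed digit grid and define $\chi$ as a parity of block-labels, reducing the problem to a finite check rather than an inductive scale argument.
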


\begin{proof} This proof takes as inspiration the proof from \cite{GRS}, which provides an accessible set
that is not $3$-large.  Let $S=\{2^{4i}: i \geq 0\}$.
From \cite[Th. 10.27]{LR}, we know that $S-S = \{2^{4j} - 2^{4i}: 0 \leq i < j\}$
is an accessible set.  We will provide a $2$-coloring of $\mathbb{Z}^+$ that avoids monochromatic $25$-term 
$(S-S)$-aps.

For each $n \in \mathbb{Z}^+$, write $n$ in its binary representation 
so that $\sum_{i \geq 0} b_i(n) 2^i = n$. Next, partition the
$b_i=b_i(n)$ into intervals of length $4$:
$$
[\dots b_i b_{i-1} \dots b_2 b_1 b_0] = \bigcup_{j \geq 0} I_j(b),
$$
where $I_j(b) = [b_{4j+3}, b_{4j+2}, b_{4j+1}, b_{4j}]$.  Each of these $I_j(b)$'s is one of
16 possible binary sequences (if some or all of $b_{4j+3}, b_{4j+2},$ and $b_{4j+1}$ are
missing for the largest $j$, we take the missing terms to be $0$).  Apply the
mapping $m$ to each $I_j(b)$:
$$
m:   \left\{ \hspace*{-5pt}
\begin{array}{lr}
 [0,0,1,1], [0,1,a,b],  [1,0,0,0], [1,0,1,0], [1,0,1,1] \! \longrightarrow \!0\\[5pt]
 [0,0,0,0], [0,0,0,1], [0,0,1,0], [1,0,0,1], [1,1,c,d] \!\longrightarrow \! 1,\\
\end{array}
\right.
$$
\normalsize
where $a,b,c,d$ may be, independently, either $0$ or $1$.

\normalsize
Let $m_j(b) = m(I_j(b))$.  We color the integer $n$ by $\chi(n) = \sum_{j} m_j(b) \,\,(\mbox{mod } 2)$.
We will now show that $\chi$ is a $2$-coloring of $\mathbb{Z}^+$ that does not admit a
monochromatic $25$-term arithmetic progression with common difference from $S-S$.

Let $x_1 < x_2 < \cdots < x_{25}$ be an arithmetic progression with common difference
$d=2^{4s} - 2^{4t}$.  We will use the shorthand $I_j(\ell)$ to represent $I_j(x_{\ell})$
and $m_j(\ell)$ to represent $m(I_j(\ell))$.
First, consider $U=\{I_t(\ell): 8 \leq \ell \leq 23\}$.  By definition of $d$, we see
that $I_t(\ell)$ and $I_t(\ell+1)$ will differ; moreover, by the definition of $d$, the set
$U$ will contain all 16 possible binary strings of length 4.  In particular, there exists
$r \in \{8,9,\dots,23\}$ such that $I_t(r) = [0,0,1,0]$.  Thus, by adding/substracting multiples
of $d$, we can conclude the following:
$$
\begin{array}{c|c|c}
j&I_t(j)&m(I_t(j))\\[3pt]\hline
&\\[-7pt]
r-7&[1,0,0,1]&1\\[3pt]
r-6&[1,0,0,0]&0\\[3pt]
r-5&[0,1,1,1]&0\\[3pt]
r-4&[0,1,1,0]&0\\[3pt]
r-3&[0,1,0,1]&0\\[3pt]
r-2&[0,1,0,0]&0\\[3pt]
r-1&[0,0,1,1]&0\\[3pt]
r&[0,0,1,0]&1\\[3pt]
r+1&[0,0,0,1]&1\\[3pt]
r+2&[0,0,0,0]&1
\end{array}
$$
\normalsize
(we do not need $j \in \{r+3, r+4,\dots,r+8\}$ and these need not exist).

Next, we consider how $d$ affects $I_s(\ell)$ for $\ell \in \{r-7, r-6, \dots,r-1,r+1, r+2\}$ for all
possible cases of $I_s(r)$.  If $I_s(r) = [1,0,0,0]$ we have $m_s(r)=0$ so that
$m_s(r)+m_t(r) = 1$.  This gives us $I_s(r+1) = [1,0,0,1]$ so that
$m_s(r+1)+m_t(r+1) = 2$.  Since all other $I_j(r)$ are unaffected by the addition of $d$
(to get to $I_j(r+1)$; there are no carries with addition by $d$).  Hence, $\chi(x_r) \neq \chi(x_{r+1})$.

This same analysis (perhaps switching the values of the sums $m_s(j)+m_t(j), j=r,r+1$) 
holds when $I_s(r) \in \{[1,0,0,1], [1,0,1,1]\}$.  If $I_s(r)$ is in $\{\,[1,1,0,1],\,\, [1,1,1,0], [1,1,1,1],\,\, 
[0,0,0,1],\,\,  [0,0,1,0],\,\,  [0,1,0,0],\break  [0,1,0,1],$   $[0,1,1,0], [0,1,1,1]\,\}$ then we have
$\chi(x_r) \neq \chi(x_{r-1})$ easily since there are no carry issues.
If $I_s(r) = [1,0,1,0]$ then $\chi(x_r) \neq \chi(x_{r+2})$; if $I_s(r) = [1,1,0,0]$ then
$\chi(r) \neq \chi(x_{r-3})$.  The remaining two cases $I_s(r) \in \{[0,0,0,0], [0,0,1,1]\}$ both
involve carries and need extra attention.  

If $I_s(r)=[0,0,0,0]$ then either
$\chi(r) \neq \chi(r-4)$ or $\chi(r) \neq \chi(r-5)$ depending on whether or not
any carries change the value of $\sum_{j > s} m_j(r-1)$.  Lastly,
if $I_s(r) = [0,0,1,1]$, then either
$\chi(r) \neq \chi(r-6)$ or $\chi(r) \neq \chi(r-7)$ depending on whether or not
any carries change the value of $\sum_{j > s} m_j(r-1)$.

We have shown that for any possible $I_s(r)$, the $25$-term $(S-S)$-ap cannot be monochromatic, thereby
proving the theorem.
\end{proof}

We will now present some positive results.  But first, some definitions.

\begin{definition}[$r$-syndetic] Let $S=\{s_1<s_2<\cdots\}$ be a syndetic set
with $s_{i+1}-s_i \leq r$ for all $i \in \mathbb{Z}^+$.  Then we say that $S$ is {\it $r$-syndetic}
\end{definition}

\begin{definition}[anastomotic] Let $D \subseteq \mathbb{Z}^+$.  If every syndetic set admits a $k$-term $D$-ap
then we say that $D$ is {\it $k$-anastomotic}.  If $D$ is $k$-anastomatic for all $k \in \mathbb{Z}^+$, then we
say that $D$ is {\it anastomotic}.
\end{definition}

\begin{remark*}  The term syndetic has been defined by other authors and is an adjective meaning serving
to connect.  The term anastomotic is new and I chose it since its meaning is: serving to communicate
between parts of a branching system.
\end{remark*}

\begin{theorem} Let $D \subseteq \mathbb{Z}^+$.  If $D$ is $r$-large, then every $r$-syndetic set admits
arbitrarily long $D$-aps.  Conversely, if every $(2r+1)$-syndetic set admits arbitrarily long
$D$-aps, then $D$ is $r$-large. \label{13}
\end{theorem}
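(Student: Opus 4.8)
The plan is to treat the two implications separately; the forward one is a clean ``distance coloring,'' and the converse is the substantive half, which I would attack by induction on the number of colors.

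For the forward direction, suppose $D$ is $r$-large and let $S=\{s_1<s_2<\cdots\}$ be $r$-syndetic, so $s_{i+1}-s_i\le r$. I would define an $r$-coloring $\chi_S$ of $\mathbb{Z}^+$ by coloring each $n\ge s_1$ with its \emph{backward distance} to $S$, namely $\chi_S(n)=n-\max\{s\in S:s\le n\}$; because every gap of $S$ has length at most $r$, these distances lie in $\{0,1,\dots,r-1\}$, so $\chi_S$ is genuinely an $r$-coloring. Applying $r$-largeness (to a shifted copy of $\chi_S$, so that the progression I extract starts beyond $s_1+r$) produces, for each $k$, a monochromatic $k$-term $D$-ap $a,a+d,\dots,a+(k-1)d$ of some common color $c$. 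Since every term has backward distance $c$, each $a+jd-c$ lies in $S$, so $a-c,(a-c)+d,\dots,(a-c)+(k-1)d$ is a $k$-term $D$-ap inside $S$. The key point is that I recover the progression in $S$ by a \emph{translation} by $c$, which preserves the common difference $d\in D$; no dilation is involved. Hence $S$ admits arbitrarily long $D$-aps.

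For the converse I would argue the contrapositive by induction on $r$. The base case $r=1$ is immediate: the hypothesis applied to $S=\mathbb{Z}^+$ (which is $1$-syndetic, hence $3$-syndetic) forces $D\neq\emptyset$, so $D$ is trivially $1$-large. For the inductive step, first observe that every $(2r-1)$-syndetic set is also $(2r+1)$-syndetic, so the hypothesis at level $r$ implies the hypothesis at level $r-1$; by the inductive hypothesis $D$ is $(r-1)$-large, and by compactness there is a finite threshold $N=N(k,r-1)$ so that every $(r-1)$-coloring of an interval of length $N$ already contains a monochromatic $k$-term $D$-ap. Now take an arbitrary $r$-coloring $\chi$. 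The natural dichotomy is: either some color class is $(2r+1)$-syndetic---in which case the hypothesis hands us a $k$-term $D$-ap inside that single class, which is automatically monochromatic---or some color class has a gap longer than $N$, whose complementary interval is then colored with only $r-1$ colors and, being of length $\ge N$, contains a monochromatic $k$-term $D$-ap by the inductive threshold. Either way $\chi$ admits a monochromatic $k$-term $D$-ap, so $D$ is $r$-large.

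The hard part will be closing the dichotomy in the intermediate regime, where \emph{every} color class is syndetic but with maximal gap strictly between $2r+1$ and $N$; such colorings genuinely exist, so neither branch above applies verbatim. This is precisely where the constant $2r+1$ must be doing its work, and I expect the resolution to require feeding the hypothesis a set that is \emph{not} a single color class---assembled from the $D$-ap-free color classes so as to remain $(2r+1)$-syndetic yet still free of long $D$-aps. The central constraint throughout is that the common difference must stay in $D$: any construction that rescales $\mathbb{Z}^+$ (replacing each $n$ by a block, or embedding via $n\mapsto (r+1)n+\chi(n)$, whose image is conveniently $2r$-syndetic) turns a $D$-ap of difference $d$ into a progression of difference $d/(r+1)$, which need not lie in $D$. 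So every reduction has to be by translation, not dilation, and calibrating the window width so that a translation-only argument closes the intermediate case is, I expect, the crux of the theorem.
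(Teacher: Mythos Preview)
Your forward direction is correct and is essentially the paper's argument (the paper uses forward distance to $S$ rather than backward distance, but the translation step is identical).

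For the converse, however, you have not given a proof: you set up an inductive dichotomy, correctly observe that it fails in the ``intermediate regime'' where every color class is syndetic with maximal gap strictly between $2r+1$ and $N(k,r-1)$, and then stop. That gap is genuine, and your induction does not close it; so as it stands the converse is unproven.

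More importantly, the approach you explicitly dismiss is exactly the one that works. The paper replaces each integer $n$ by a block of length $r$ carrying a single $1$ in position $\sigma(n)$; equivalently, it embeds via $n\mapsto (n-1)r+\sigma(n)$. The image $S$ of this map is $(2r-1)$-syndetic (hence $(2r+1)$-syndetic), so by hypothesis $S$ contains a $D$-ap of any prescribed length, in particular of length $rk$. Your objection---that pulling a $D$-ap back through a scale-$r$ embedding yields difference $d/r\notin D$---is answered by \emph{subsampling first}: from a $D$-ap $a,a+d,\dots,a+(rk-1)d$ inside $S$, take every $r$th term. This sub-progression has common difference $rd$, and since $rd\equiv 0\pmod r$, all $k$ of its terms lie in the same residue class modulo $r$; writing $a=(n_0-1)r+i_0$, the terms are exactly $(n_0+jd-1)r+i_0$ for $0\le j\le k-1$. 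Membership in $S$ then forces $\sigma(n_0+jd)=i_0$ for every $j$, so $n_0,n_0+d,\dots,n_0+(k-1)d$ is a monochromatic $k$-term $D$-ap in the original coloring, with difference $d\in D$ untouched. The dilation you feared is undone by the subsample, not by a division, so the difference stays in $D$. Your own embedding $n\mapsto (r+1)n+\chi(n)$ would work the same way (subsample every $(r+1)$th term of a length-$(r+1)k$ $D$-ap), and its image is $2r$-syndetic, which already explains where the constant comes from.
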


\begin{proof} The first statement is straightforward: for any $r$-syndetic set $S$, we define an $r$-coloring
$\chi:\mathbb{Z}^+\rightarrow \{1,\dots,r\}$ by $\chi(i) = \displaystyle \min(\{s-i: s \in S \mbox{ with }i \leq s\}+1$.  Since
$D$ is $r$-large we have arbitrarily long $D$-aps under $\chi$. Since arithmetic progressions are translation invariant, 
by the definition of $\chi$ we see that $S$ admits
arbitrarily long $D$-aps.

To prove the second statement, consider $\sigma$, an arbitrary $r$-coloring of $\mathbb{Z}^+$ using the colors $1,2,\dots,r$.
For every color $i$, replace each occurrence of the color $i$ in $\sigma$ by the string of length $r$ with a $1$ in the $i$th position and
$0$ in all others.  This process gives us a $2$-coloring $\hat{\sigma}$ of $\mathbb{Z}$.
Let $S$ be the set of positions of all $1s$ under $\hat{\sigma}$.  Note that $S$ is a $(2r+1)$-syndetic set.
By assumption, $S$ admits arbitrarily long monochromatic $D$-aps.  In particular, it admits arbitrarily long
$(rD)$-aps (by taking every $r$th term in a sufficieintly long $D$-ap).
Since we now have an arbitrarily long $(rD)$-ap, in the original $r$-coloring $\sigma$ we have a monochromatic $D$-ap.
\end{proof}

\begin{remark*} Recently, Host, Kra, and Maass \cite{HKM} have independently proven a result similar
to Theorem \ref{13}; their result is slightly stronger in that they prove that ``$(2r+1)$-syndetic"
can be replaced by ``$(2r-1)$-syndetic."
\end{remark*}

An immediate consequence of Theorem \ref{13} is the following:

\begin{corollary} Let $D \subseteq \mathbb{Z}^+$.  Then $D$ is large if and only if
$D$ is anastomotic. \label{cor14}
\end{corollary}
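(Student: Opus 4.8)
The plan is to read off both implications directly from Theorem \ref{13}, the only genuine bookkeeping being to reconcile the quantifier on the syndeticity constant. Recall that a set is \emph{syndetic} precisely when it is $r$-syndetic for \emph{some} $r$, whereas $D$ being \emph{anastomotic} means that \emph{every} syndetic set admits arbitrarily long $D$-aps, i.e.\ that $D$ is $k$-anastomotic for all $k$. Largeness, likewise, unpacks as $D$ being $r$-large for every $r$. Both directions should then fall out by feeding the appropriate syndeticity level into the appropriate half of Theorem \ref{13}.

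For the forward implication I would assume $D$ is large and fix an arbitrary syndetic set $S$, say $r$-syndetic for some $r \in \mathbb{Z}^+$. Since $D$ is large it is in particular $r$-large, so the first statement of Theorem \ref{13} guarantees that $S$ admits arbitrarily long $D$-aps. As $S$ was an arbitrary syndetic set, $D$ is $k$-anastomotic for every $k$, hence anastomotic.

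For the converse I would assume $D$ is anastomotic and fix an arbitrary $r \in \mathbb{Z}^+$. Every $(2r+1)$-syndetic set is in particular syndetic, so by hypothesis admits arbitrarily long $D$-aps; the second (converse) statement of Theorem \ref{13} then yields that $D$ is $r$-large. Since $r$ was arbitrary, $D$ is large. The single point requiring care—and effectively the whole content of this otherwise one-line deduction—is aligning the per-$r$ statements of Theorem \ref{13} with the uniform-over-all-syndetic-sets definition of anastomotic: in the forward direction one supplies the set's own gap bound $r$, while in the converse one restricts attention to the $(2r+1)$-syndetic sets that the converse half of Theorem \ref{13} requires. Beyond this quantifier matching I anticipate no obstacle, as no estimate stronger than Theorem \ref{13} itself is invoked.
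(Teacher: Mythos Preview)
Your proof is correct and is precisely the argument the paper has in mind: the corollary is stated as an ``immediate consequence of Theorem \ref{13}'' with no further proof given, and your quantifier bookkeeping is exactly what that phrase unpacks to.
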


It would be nice if every syndetic set contained an infinite
arithmetic progression as we would be done: since every $2$-large
set contains a multiple of every integer, every $2$-large set would
be anastomotic and, by Corollary \ref{cor14} we would be done.  Unfortunately, this is not true.

Let $\alpha$ be irrational and let $\chi: \mathbb{Z}^+ \rightarrow \{0,1\}$
be
$\chi(x) = \lfloor \alpha n \rfloor - \lfloor \alpha (n-1) \rfloor$.
Each color class under $\chi$ corresponds to a syndetic set
(e.g., if $\alpha$ is the golden ratio, then each color
class is $4$-syndetic).  Consider $S = \{i: \chi(i)=0\}$.

Assume, for a contradiction, that there exist integers
$a$ and $d$ such that $S$ contains $a+dn$ for all $n\in\mathbb{Z}^+$.
Then we have $\lfloor \alpha(a+dn)\rfloor = \lfloor \alpha(a+d(n-1))\rfloor$
for all positive integers $n \geq 2$.  Let $\beta = \alpha d$ and note
that $\beta$ is also irrational.  Let $\{ x \}$ be the fractional part
of $x$.  Then $\{ \{\beta n\}: n \in \mathbb{Z}^+\}$ is dense in
$[0,1)$. 

Consider $y \in (\{-\alpha a\}, \{\alpha - \alpha a\})$.  We claim that
we cannot have $\{\beta n\} = y$ for any $n \in \mathbb{Z}^+$.
Assume to the contrary that $\{\beta j\} = y$ so that
$\beta j = \ell + y$ for some integer $\ell$.  Then
$\beta j - y$ is an integer.  By choice of $y$, we have
$\beta j - y$ strictly between $\beta j + \alpha a - \alpha$ and 
$\beta j + \alpha a$.  But this is not possible since
$\lfloor \beta j + \alpha a - \alpha\rfloor = \lfloor \beta j + \alpha a \rfloor$.  Hence, 
$\{ \{\beta n\}: n \in \mathbb{Z}^+\} \cap (\{-\alpha a\}, \{\alpha - \alpha a\}) = \emptyset.$ But 
$\{ \{\beta n\}: n \in \mathbb{Z}^+\}$ is dense in
$[0,1)$, a contradiction.\footnote{The preceeding argument
is based on an answer given by Mario Carneiro on
{\tt math.stackexchange.com} for question 1487778; I was unable
to find a published reference.}

Hence, we have a syndetic set without an infinite arithmetic
progression.  The same analysis shows that $\{i: \chi(i)=1\}$
does not contain one either.  Hence, we can cover
the positive integers with two syndetic sets, neither of which
contain an infinite arithmetic progression.

\subsection{A Brief Detour}

We take a brief side trip back to the dynamical system setting 
in order to expand on Figure 1 
to include the new notions just introduced so that we have an
overview of how the different types of recurrence are related. Furthermore,
we define two other types of recurrence to display the relationships between a fixed number of
colors and an arbitrary number of colors.

\begin{definitions}
Let $r \in \mathbb{Z}^+$ be fixed and consider $D \subseteq \mathbb{Z}^+$.
 We say $D$ is {\it $r$-chromatically $k$-recurrent} if every
finite $r$-coloring of $\mathbb{Z}^+$ admits a monochromatic $k$-term $D$-ap; we say $D$ is 
{\it $r$-syndetically $k$-recurrent} if every $r$-syndetic set contains a $k$-term $D$-ap.
\end{definitions}

In Figure \ref{fig3} below, we assume that the same restrictions as those in Figure 1
are still in place. Missing implications are unknown.

\begin{figure}[h]
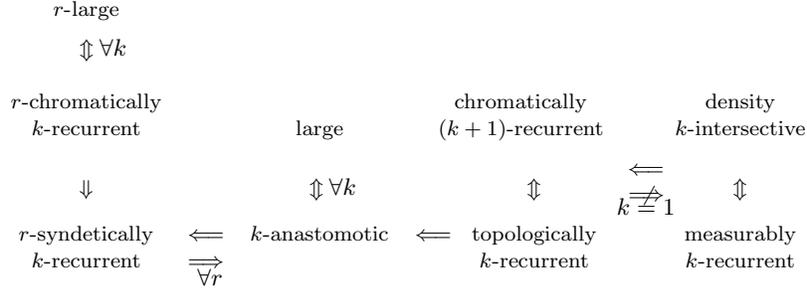
 
$$
\begin{array}{ccccccccc}
&&&&\mbox{measurably}\\
&&&&\mbox{$k$-recurrent}\\
&&&&\Big \Updownarrow\\
&&&&\mbox{density}\\
\mbox{$r$-large}&&&&\mbox{$k$-intersective}\\[5pt]
\hskip 13pt\Big\Updownarrow{\mbox{\small$\forall k$}}&&&&\hskip 19pt\Big \Uparrow \hskip 5pt{\not} \Big\Downarrow \mbox{\small $k\!=\!1$}\\[5pt]
\mbox{$r$-chromatically}&&&& \mbox{chromatically}&&\\
\mbox{$k$-recurrent}&&\mbox{{large}}&& \mbox{$(k+1)$-recurrent}&&\\
\Big\Downarrow&&\hskip 10pt\Big\Updownarrow {\mbox{\small$\forall k$}}&&
 \Big\Updownarrow&&\\
\mbox{$r$-syndetically}&\hspace*{-5pt}\Longleftarrow&\mbox{$k$-anastomotic}&\Longleftarrow \hspace*{-10pt}& \mbox{topologically}&&\\
\mbox{$k$-recurrent}&\hspace*{-5pt}\Longrightarrow&&& \mbox{$k$-recurrent}&&\\
[-3pt]
&\hspace*{-10pt}{\small \mbox{ $\forall r$}}\\[-10pt]
\end{array}
$$ \normalsize
\caption{Relationship between types of recurrence considered in this article}\label{fig3}
\end{figure}

\subsection{Back to the Combinatorics Encampment}

You are surely asking yourself, if you've traveled with me this far: do you
have any positive results? Well, to help aid you in keeping a sanguine
outlook, I'll now offer a few items that offer a
glimmer of hope for escape from our current residence in the dark depths 
of the large rabbit hole (pun intended).

We start with a  strong condition for a set to be large.

\begin{definition}[bounded multiples condition]
We say that $D\subseteq \mathbb{Z}^+$ satisfies the {\it
bounded multiples condition} if there exists $M \in \mathbb{Z}^+$
such that for every $i \in \mathbb{Z}^+$, there exists $m \leq M$ such
that $im \in D$.  In other words, for every positive integer $i$,
at least one element of $\{i, 2i, 3i,\dots,Mi\}$ is in $D$. \label{bmc}
\end{definition}

Using this definition, we have the following characterization.

\begin{theorem} If $D \subseteq \mathbb{Z}^+$ satisfies the bounded multiples
condition, then $D$ is large. \label{prop1}
\end{theorem}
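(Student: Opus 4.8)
The plan is to reduce directly to van der Waerden's Theorem (Theorem \ref{vdw}) by exploiting the uniform bound $M$ to convert an ordinary monochromatic arithmetic progression of controlled length into a monochromatic $D$-ap. The crucial observation is that we need not know in advance the common difference produced by van der Waerden; the bound $M$ guarantees that, whatever that common difference turns out to be, a suitable multiple of it lands in $D$ within a predictable range.

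First I would fix arbitrary $r, k \in \mathbb{Z}^+$ and let $M$ be the constant supplied by the bounded multiples condition. Set $L = (k-1)M + 1$. Given any $r$-coloring of $\mathbb{Z}^+$, van der Waerden's Theorem furnishes a monochromatic $L$-term arithmetic progression $a, a+d, a+2d, \ldots, a+(L-1)d$ for some $d \in \mathbb{Z}^+$.

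Next I would apply the bounded multiples condition with $i = d$: there exists $m \leq M$ with $md \in D$. Consider the sub-progression $a, a+md, a+2md, \ldots, a+(k-1)md$, obtained by taking every $m$-th term of the original progression. Its common difference is $md \in D$, and since $(k-1)m \leq (k-1)M = L - 1$, each of its $k$ terms $a + (jm)d$ (for $j = 0, 1, \ldots, k-1$) appears among the original $L$ terms and is therefore monochromatic. This is a monochromatic $k$-term $D$-ap. As $r$ and $k$ were arbitrary, $D$ is $r$-large for every $r$, hence large.

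The argument has essentially no obstacle — the entire content is the pairing of the length bound $L = (k-1)M + 1$ with the uniform multiplicity bound $M$, which simultaneously ensures the extracted sub-progression is long enough and has its common difference inside $D$. The only point requiring a moment's care is verifying that the index $jm$ of each selected term stays within $\{0, 1, \ldots, L-1\}$, which follows immediately from $m \leq M$.
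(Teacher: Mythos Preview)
Your proof is correct, and in fact cleaner than the paper's. The paper takes a more circuitous route: rather than working directly with an $r$-coloring, it invokes the equivalence between large and anastomotic (Corollary~\ref{cor14}) and proves that every syndetic set $A$ contains arbitrarily long $D$-aps. This forces a case split on whether $A^c$ is syndetic, and in the main case the paper encodes the local pattern of $A$ versus $A^c$ into a $2^{g+1}$-coloring $\gamma$, applies van der Waerden to $\gamma$ to find an $Mk$-term arithmetic progression, and only then extracts the $k$-term $D$-ap by taking every $m^{\mathrm{th}}$ term---exactly the step you perform directly. Your approach bypasses the anastomotic machinery entirely and goes straight from van der Waerden on the original $r$-coloring to the conclusion; the paper's route buys the slightly stronger side observation that both $A$ and $A^c$ receive $D$-aps of the same common difference, and illustrates the anastomotic framework the author has just introduced, but for the bare statement of Theorem~\ref{prop1} your argument is the more economical one.
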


\begin{proof} Let $M$ be the constant that exists by the bounded multiples condition.
We proceed by showing that $D$ is $k$-anastomotic
for all $k$.  Let $A \subseteq \mathbb{Z}^+$ by syndetic.  If $A^c$
is not syndetic, then it has arbitrarily long gaps.  This means
that $A$ contains arbitrarily long intervals.  In this situation,
$A$ contains arbitrarily long $D$-aps. 

Now let both of the sets $A=\{a_i\}_{i \in \mathbb{Z}^+}$ and $A^c=B=\{b_i\}_{i \in \mathbb{Z}^+}$  be $g$-syndetic with
$g = \max_{i \in \mathbb{Z}^+}\{a_{i+1}-a_i, b_{i+1}-b_i\}$.  Let
$\chi(n)$ equal 1 if $n \in A$ and $0$ if $n \in B$.  Define
the $2^{g+1}$-coloring  $\gamma: \mathbb{Z}^+ \rightarrow \{0,1\}^{g+1}$
by $\gamma(n) = (\chi(n), \chi(n+1),\dots,\chi(n+g))$.
Note that $\gamma(n)$ cannot consist of all 0s or all 1s by
the definition of $g$.

Since $\gamma$ is a finite coloring of $\mathbb{Z}^+$, by van der Waerden's
Theorem there exists a monochromatic $Mk$-term arithmetic progression
under $\gamma.$  By the definition of $\gamma$ both $A$ and $B$
contain $Mk$-term arithmetic progressions, each with the same
common difference.  Let $d$ be this common difference.

Since $D$ satisfies the bounded multiples condition, there exists $m \leq M$ such that
$md \in D$.  By taking every $m^{\mathrm{th}}$ term of the $Mk$-term arithmetic progressions,
we see that both $A$ and $B$ have $k$-term $D$-aps.
\end{proof}

\begin{remark*} The converse of Theorem \ref{prop1} is not true.
We know that the set of perfect squares is large; however, it  
 does not satisfy the bounded multiples
condition.  To see this, consider a prime $p$.  Then the smallest multiple
of $p$ in the set of perfect squares is $p^2$.  Since $p$ may be arbitrarily
large, we do not have the existence of $M$ needed in the definition above.
\end{remark*}

As was done at the beginning of this article, we will now present a
``finite version" of the 2-large definition.  Instead of appealing
to the Compactness Principle, we will offer a terse proof of
equivalence.

\begin{lemma} Let $D$ be $2$-large.  For each $k \in \mathbb{Z}^+$, there exists 
an integer $N=N(k,D)$ such that every $2$-coloring of $\{1,2,\dots,N\}$ admits
a monochromatic $k$-term $D$-ap. \label{lem16}
\end{lemma}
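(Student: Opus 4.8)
The plan is to argue by contradiction via a direct diagonalization, which serves as the promised terse substitute for a black-box appeal to the Compactness Principle. Fix $k$ and suppose, for contradiction, that no such $N$ exists. Then for every $N \in \mathbb{Z}^+$ there is a $2$-coloring $\chi_N : \{1,2,\dots,N\} \to \{0,1\}$ admitting no monochromatic $k$-term $D$-ap. I will use this infinite family of ``bad'' finite colorings to manufacture a single bad $2$-coloring of all of $\mathbb{Z}^+$, contradicting the hypothesis that $D$ is $2$-large.

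First I would extract a coherent limit coloring by a nested-subsequence argument. Since there are only two colors, infinitely many of the $\chi_N$ assign the same color to the integer $1$; record that color as $c_1$ and pass to the corresponding infinite subfamily. Within that subfamily, infinitely many agree on the color of $2$; record $c_2$ and refine again. Iterating over all positive integers produces colors $c_i \in \{0,1\}$, and I define $\chi : \mathbb{Z}^+ \to \{0,1\}$ by $\chi(i) = c_i$. By construction, for every $L$ there are infinitely many indices $N$ (namely those surviving to the $L$th stage of refinement) for which $\chi_N$ agrees with $\chi$ on the entire block $\{1,2,\dots,L\}$.

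Next I would verify that this limit coloring $\chi$ is itself bad. Suppose instead that $\chi$ contained a monochromatic $k$-term $D$-ap $a, a+d, \dots, a+(k-1)d$ with $d \in D$, and set $L = a+(k-1)d$, so that the whole progression lies in $\{1,2,\dots,L\}$. By the previous paragraph there is some index $N \geq L$ for which $\chi_N$ agrees with $\chi$ on $\{1,2,\dots,L\}$; but then $\chi_N$ would exhibit exactly this monochromatic $k$-term $D$-ap, contradicting the choice of $\chi_N$ as a bad coloring. Hence $\chi$ admits no monochromatic $k$-term $D$-ap. Since $\chi$ is a $2$-coloring of $\mathbb{Z}^+$, this contradicts the assumption that $D$ is $2$-large, which guarantees a monochromatic $k$-term $D$-ap for this very $k$. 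The contradiction completes the proof.

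The only delicate point---the ``hard part,'' such as it is---is ensuring the limit coloring inherits the bad property. This hinges entirely on the observation that any single $k$-term $D$-ap is a \emph{finite} configuration, confined to an initial segment $\{1,\dots,L\}$ on which $\chi$ is matched by some genuine finite coloring $\chi_N$ with $N \geq L$; the nestedness of the refined subfamilies is exactly what makes both the agreement on $\{1,\dots,L\}$ and the availability of an arbitrarily large such $N$ hold simultaneously. Everything else is routine bookkeeping.
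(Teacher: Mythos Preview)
Your proof is correct and matches the paper's argument essentially line for line: both assume no such $N$ exists, take a bad $2$-coloring $\chi_N$ of $\{1,\dots,N\}$ for each $N$, extract a limit coloring by the same nested-refinement (Cantor diagonal) procedure, and observe that any monochromatic $k$-term $D$-ap in the limit would already appear in some finite $\chi_N$. The paper merely compresses the bookkeeping you spell out into a single inductive clause defining $\gamma(j)$ as whichever color occurs infinitely often among the surviving $\chi_i$.
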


\begin{proof} Assume not and, for each $i \in \mathbb{Z}^+$, let $\chi_i$ be a $2$-coloring of $\{1,2,\dots,i\}$ with
no monochromatic $k$-term $D$-ap.  Define $\gamma$, inductively, by
$\gamma(j)=c_j$ where $\chi_i(j)=c_j$ occurs infinitely often among those $\chi_i$
where
$\chi_i(\ell) = \gamma(\ell)$ for $\ell<j$.  Now note that $\gamma$ is a
$2$-coloring of $\mathbb{Z}^+$ with no monochromatic $k$-term
$D$-ap, a contradiction.
\end{proof}

We will have use for the following notation in the remainder of this section.

\begin{notation}  Let $m \in \mathbb{Z}^+$ and $D \subseteq \mathbb{Z}^+$.
Then $mD = \{md: d \in D\}$ and $D^m = \displaystyle\left\{ \prod_{f \in F}  d_{f}: d_{i}  \in D, F \subseteq \mathbb{Z}^+ \mbox{ with } |F|=m
\right\}$. 
\end{notation}

We can now present two easy lemmas.  We will provide a proof for the
first and leave the very similar proof of the second to the reader.

\begin{lemma} Let $D$ be $2$-large and let $m \in \mathbb{Z}^+$.  Then $N(k,mD) \leq mN(k,D)$. \label{lem17}
\end{lemma}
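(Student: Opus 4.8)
The plan is to exploit the dilation-invariance of arithmetic progressions together with the finite threshold $N(k,D)$ furnished by Lemma \ref{lem16}. The guiding observation is that a $k$-term $mD$-ap is precisely the $m$-fold dilate of a $k$-term $D$-ap: if $a,a+d,\dots,a+(k-1)d$ is a $D$-ap (so $d\in D$), then $ma,ma+md,\dots,ma+(k-1)md$ is an $mD$-ap, and conversely. So to locate a monochromatic $mD$-ap inside $\{1,2,\dots,mN\}$ it suffices to locate a monochromatic $D$-ap inside a suitably thinned-out copy of $\{1,2,\dots,N\}$.

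\textbf{Key steps.} Set $N=N(k,D)$, which exists because $D$ is $2$-large. First I would take an arbitrary $2$-coloring $\chi$ of $\{1,2,\dots,mN\}$ and restrict attention to the $N$ multiples of $m$ in this range, namely $m,2m,\dots,Nm$. I would then define an induced $2$-coloring $\chi'$ of $\{1,2,\dots,N\}$ by $\chi'(j)=\chi(mj)$. By the defining property of $N=N(k,D)$, the coloring $\chi'$ admits a monochromatic $k$-term $D$-ap, say $a,a+d,\dots,a+(k-1)d$ with $d\in D$ and $a+(k-1)d\le N$. Finally, I would pull this back through the dilation $j\mapsto mj$: the terms $ma,\,ma+md,\dots,ma+(k-1)md$ all lie in $\{1,2,\dots,mN\}$, all receive the same $\chi$-color (since $\chi(m(a+id))=\chi'(a+id)$), and form an arithmetic progression with common difference $md\in mD$. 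This is the desired monochromatic $k$-term $mD$-ap, so every $2$-coloring of $\{1,\dots,mN\}$ contains one, giving $N(k,mD)\le mN=m\,N(k,D)$.

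\textbf{Main obstacle.} There is essentially no hard analytic step here; the only point requiring a moment's care is the well-definedness of $N(k,mD)$, i.e.\ that $mD$ is itself $2$-large so that the quantity on the left-hand side is meaningful. But the argument above resolves this simultaneously: since it produces a monochromatic $k$-term $mD$-ap in $\{1,\dots,mN\}$ for \emph{every} $k$ (the threshold $N(k,D)$ being available for all $k$), it shows both that $mD$ is $2$-large and that the threshold is bounded by $m\,N(k,D)$. The one bookkeeping detail to verify is that the thinned coloring $\chi'$ is a genuine $2$-coloring of all of $\{1,\dots,N\}$ and that the pulled-back progression stays within the range $\{1,\dots,mN\}$, both of which are immediate from the indexing.
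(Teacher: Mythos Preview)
Your argument is correct and is essentially identical to the paper's own proof: restrict an arbitrary $2$-coloring of $\{1,\dots,mN(k,D)\}$ to the multiples of $m$, use the bijection with $\{1,\dots,N(k,D)\}$ to extract a monochromatic $k$-term $D$-ap, and dilate back to obtain a monochromatic $k$-term $mD$-ap. Your added remark that this simultaneously shows $mD$ is $2$-large (so that $N(k,mD)$ is well-defined) is a welcome bit of care that the paper leaves implicit.
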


\begin{proof} Consider any $2$-coloring of the first $mN(k,D)$ positive
integers.  We will show that there exists a monochromatic $k$-term
$mD$-ap.  Given our coloring, consider only those integers divisible
by $m$.  Via the obvious one-to-one correspondence between
$\{m,2m,\dots,mN(k,D)\}$ and $\{1,2,\dots,N(k,D)\}$, we have a monochromatic
$k$-term $D$-ap in the latter interval, meaning that we have a monochromatic
$k$-term $mD$-ap in the former interval.
\end{proof}

\begin{lemma}
Let $m\in \mathbb{Z}^+$.  Then $D$ is $r$-large if and only if $mD$ is $r$-large. \label{lem18}
\end{lemma}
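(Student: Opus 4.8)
The plan is to prove both implications by transferring colorings between $\mathbb{Z}^+$ and a dilated or contracted copy of itself, in the same spirit as the proof of Lemma~\ref{lem17}. Throughout I fix $k \in \mathbb{Z}^+$; since $r$-largeness is quantified over all $k$, it suffices to produce from an arbitrary $r$-coloring a monochromatic $k$-term ap whose common difference lies in the required set.

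For the forward direction ($D$ is $r$-large $\Rightarrow$ $mD$ is $r$-large) I would restrict attention to the multiples of $m$. Given an arbitrary $r$-coloring $\chi$ of $\mathbb{Z}^+$, define $\chi'(n) = \chi(mn)$, which is again an $r$-coloring of $\mathbb{Z}^+$. By $r$-largeness of $D$, $\chi'$ admits a monochromatic $k$-term $D$-ap $b, b+d, \dots, b+(k-1)d$ with $d \in D$. Scaling by $m$, the integers $mb, mb+md, \dots, mb+(k-1)md$ all receive the same $\chi$-color and form a $k$-term $mD$-ap, since the common difference $md$ lies in $mD$.

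For the reverse direction ($mD$ is $r$-large $\Rightarrow$ $D$ is $r$-large) I would instead \emph{stretch} the coloring by repeating each color $m$ consecutive times. Given an arbitrary $r$-coloring $\chi$ of $\mathbb{Z}^+$, define $\psi(n) = \chi(\lceil n/m \rceil)$, so that the block $\{(j-1)m+1, \dots, jm\}$ is colored $\chi(j)$. By $r$-largeness of $mD$, $\psi$ admits a monochromatic $k$-term $mD$-ap $a, a+md, \dots, a+(k-1)md$ with $d \in D$. The key observation is that, because the common difference $md$ is a multiple of $m$, we have $\lceil (a+jmd)/m \rceil = \lceil a/m \rceil + jd$ for each $j$; hence $\chi$ assigns the same color to $\lceil a/m \rceil, \lceil a/m\rceil + d, \dots, \lceil a/m\rceil + (k-1)d$, which is a monochromatic $k$-term $D$-ap under $\chi$.

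The entire content of the argument sits in the index bookkeeping of the reverse direction: one must verify that the stretching map $\psi(n)=\chi(\lceil n/m\rceil)$ collapses an $mD$-ap to a genuine $D$-ap. This hinges on the common difference being divisible by $m$, so that the ceiling function advances by exactly $d$ at each step with no residual error, and on checking that the resulting first term $\lceil a/m\rceil$ is a positive integer. This is the only place anything could go wrong, and it is routine; in particular no appeal to compactness or to van der Waerden's Theorem is needed in either direction.
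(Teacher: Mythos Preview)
Your proof is correct. The paper leaves this lemma to the reader, indicating only that the argument is ``very similar'' to that of Lemma~\ref{lem17}; your forward direction is exactly the restriction-to-multiples-of-$m$ trick used there, and your reverse direction via the stretched coloring $\psi(n)=\chi(\lceil n/m\rceil)$ is the natural companion argument, so this is precisely the intended approach.
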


We also will use the following definition.

\begin{definition} Let $D$ be $2$-large. Define $M(k,D;2) = N(k,D)$ and, for $r \geq 3$,
$$
M(k,D;r)=N(M(k,D;r-1),D),
$$
where $N(k,D)$ is the integer from Lemma \ref{lem16}.
\end{definition}

As $N(k,D)$ exists for all $k$ (by Lemma \ref{lem16}), we see that $M(k,D;r)$ is well-defined
for all $k$ and $r$.

We are now ready to finally see a ray of hope in the next theorem. 
due to
Brown, Graham, and Landman \cite{BGL}.  The proof presented here is
the details of the sketch given in \cite{BGL} using the functions defined in
the present article. 

\begin{theorem}[\cite{BGL}] Let $r \in \mathbb{Z}^+$. Let $D \subseteq \mathbb{Z}^+$ be $r$-large.
Then $D^m$ is $r^m$-large. \label{main}
\end{theorem}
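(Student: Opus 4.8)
The plan is to prove the equivalent statement that $D^2$ is $r$-large for every $r$, arguing by induction on $r$ and using the finite version of $2$-largeness furnished by Lemma \ref{lem16}. The base cases $r \le 4$ are already available, since the quoted result of Brown, Graham, and Landman gives that $D^2$ is $2^2=4$-large. For the inductive step I would work inside the interval $[1,M(k,D;r)]$ and exploit the one tool that a $2$-large set reliably provides: given any $2$-coloring of a long enough interval, a monochromatic $k$-term $D$-ap exists, with the length controlled by $N(k,D)$ and its iterates $M(k,D;r)$.

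The geometric picture I would keep in mind is that a $D^2$-ap is a ``$D$-ap of $D$-blocks'': if we tile an interval by consecutive blocks of common $D$-length $d_2$ and then select one block out of every $d_1$ (with $d_1\in D$), always reading off the same position inside each chosen block, the selected integers form an arithmetic progression of common difference $d_1 d_2\in D^2$. Thus one factor of $D$ should come from the block length and the other from a $D$-spaced selection of blocks; the $2$-largeness of $D$ (equivalently, of $d_2D$, by Lemma \ref{lem18}) is exactly what delivers the selecting $D$-ap once the relevant block-coloring has been cut down to two colors. I would set the block length $d_2\in D$ to exceed the syndeticity constant supplied when one instead runs the argument through Corollary \ref{cor14}, reducing ``$D^2$ large'' to ``$D^2$ anastomotic'' so that only syndetic sets need be handled, as in the proof of Theorem \ref{prop1}.

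The hard part, and where every naive attempt stalls, is reconciling two $D$-factors with an unbounded number of colors. Straightforward nesting---extract a coarse-monochromatic $D$-ap, pass to its index space, and recurse---multiplies the common differences at each step and therefore only yields a $D^{r-1}$-ap, while the color-halving argument behind ``$D^2$ is $4$-large'' saturates at four colors; neither reaches all $r$. The real content must be to absorb all the colors at a single level. My plan for this is to collapse the many-valued intra-block alignment coloring down to two colors using the color-encoding device from the proof of Theorem \ref{13} (the expansion of an $r$-coloring into a sparse binary coloring whose set of $1$'s is syndetic), so that a single application of $2$-largeness produces the selecting $D$-ap, after which the block structure realigns the colors and recovers a monochromatic $D^2$-ap. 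The delicate point---the step I expect to fight hardest---is bookkeeping the dilation introduced by this encoding so that the recovered common difference lands in $D^2$ rather than in a larger product set such as $D^3$; here I would lean on the calibration $N(k,mD)\le mN(k,D)$ of Lemma \ref{lem17} together with Lemma \ref{lem18} to match the block scale $d_2$ against the selection scale and keep exactly two $D$-factors in play.
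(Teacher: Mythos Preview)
Your proposal circles around relevant ingredients (a finite-version induction on $r$, a binary encoding of colors, concern about accumulating $D$-factors), but the plan as stated has a real gap at exactly the place you flag: the dilation bookkeeping. Routing through Corollary~\ref{cor14} and the block picture forces you to fix one factor $d_2\in D$ in advance (the block length) and then extract the second factor by a single application of $2$-largeness to some derived $2$-coloring. But the derived coloring you propose---the one-hot encoding from Theorem~\ref{13}, or equivalently the block-pattern coloring---has on the order of $2^{d_2}$ (or $r$) values, and collapsing it to two colors via the sparse-$1$'s device only returns a $D$-ap after you pass to every $r$th (or every $d_2$th) term. That rescaling is exactly the uncontrolled dilation you worry about: nothing forces $r$ or $d_2$ to lie in $D$, so the recovered common difference is $r\cdot d$ or $d_2\cdot d$, not $d_1 d_2$ with both $d_i\in D$. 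Lemmas~\ref{lem17} and~\ref{lem18} let you trade $mD$ for $D$ at the level of \emph{largeness}, but they do not let you divide an individual common difference by $m$ and stay in $D$. So ``absorb all the colors at a single level'' does not close.

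The paper avoids this entirely by choosing a different inductive statement: it proves, by induction on $r$, that every $r$-coloring of a sufficiently long \emph{$D$-ap} (not of an interval, and not of a syndetic set) contains a monochromatic $k$-term $D^2$-ap. The point of working inside a $D$-ap with gap $d\in D$ is that the base case $r=2$ delivers both factors at once: $2$-largeness applied to the index set yields a $k$-term $D$-ap of indices, which in the ambient $D$-ap is a $dD$-ap, hence a $D^2$-ap. The inductive step then need only reduce $r$ to $r-1$ \emph{while remaining inside a $D$-ap}, contributing no new $D$-factor. This is done by writing the $r$ colors in binary with $t=\lceil\log_2 r\rceil$ bits, viewing the result as a $2$-coloring of a $t$-times longer interval, and using $2$-largeness (via Lemma~\ref{lem17}) to find a long monochromatic $tD$-ap there; back in the original coloring this is a $D$-ap along which one bit is constant, so at least one of the $r$ colors is absent. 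Your proposal never isolates this ``color inside a $D$-ap'' formulation, and without it the two $D$-factors cannot be kept from proliferating.
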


We will prove this for $r=2$.  The generalization to
arbitrary $r$ is clear but requires new notations and
definitions.

\begin{proof}  The proof is by induction on $m$. We will show that 
any $2^m$-coloring of
$[1,M(k,D^m;2^m)]$ admits a monochromatic $k$-term $D^{m}$-ap.  
For $m=1$, we have $M(k,D^m;2^m) = N(k,D)$ so that the base case
holds by definition of $N(k,D)$.
We now assume the statement for $m$ and will show it for $m+1$.
Note that we will be using the definition
$M(k,D^m;r) = N(M(k,D^m;r-1),D)$ as the function $N$ is only
valid for the given $2$-large set $D$.
Consider any $2^{m+1}$-coloring of $[1,M(k,D^{m};2^{m+1})]$.
Partition the colors into two equal sets, each of size $2^m$.
Calling these sets ``red" and ``blue," we can view the  coloring
as a 2-coloring of the interval $[1,N(M(k,D^m; 2^{m+1}-1),D)] \supseteq [1,N(M(k,D^m; 2^{m}),D)]$.  By definition of $N(M(k,D^m; 2^{m}),D)$ we  have
either a ``red" or ``blue" $D$-ap of length $M(k,D^m; 2^{m})$.
In either case, we have a $2^m$-coloring of a $D$-ap:
$a+d[1,M(k,D^m; 2^{m})]$.  By the induction hypothesis, 
$[1,M(k,D^m; 2^{m})]$ admits a monochromatic $D^m$-ap of length $k$, giving
us a monochromatic $k$-term $D^{m+1}$-ap. 
\end{proof}

\section{Epilogue}
Having attained Theorem \ref{main}, a hidden door in the rabbit hole has opened, leading
us back to the Stone-\v{C}ech locale; by Theorem \ref{main}, if $D$ is
a $2$-large subsemigroup of $(\mathbb{Z}^+,\cdot)$,
then $D$ is large.  
For example, this result gives us, for any monomial $x^n$:   if
$S=\{x^n: x \in \mathbb{Z}^+\}$ is $2$-large, then it is large.
This holds 
since $i^n \cdot j^n = (ij)^n$ gives us that $S$ is a
semigroup (of course, we already know $S$ is large via other means).  However, the set of odd positive integers is also
a semigroup (under multiplication) but is not $2$-large since, from \cite{BGL}, a $2$-large
set must have a multiple of every positive integer.  
Combining the range of polynomials and multiples of every positive integer,
 Frantzikinakis \cite{Fran} has shown that if $p(n): \mathbb{Z}^+ \rightarrow
\mathbb{Z}^+$ is an integer-valued polynomial then $D=\{p(n): n \in \mathbb{Z}^+\}$
is measurably $k$-recurrent for all $k$ if and only if it contains
multiples of every positive integer.  

And now we once again find ourselves wading in the dynamical
systems pool after traveling through the Stone-\v{C}ech locale.

Note that  Frantzikinakis' result
is stronger than  $D$ being large (see Figure \ref{fig3}) and relies heavily on polynomials but
also suggests that, perhaps, if $D$ contains
multiples of every positive integer, then $D$ is large. 
If this were true, then  the $2$-Large Conjecture is true.
But yet again, we are foiled: the set $\{n! : n \in \mathbb{Z}^+\}$
clearly contains a multiple of every positive integer, but
is not  2-large \cite{BGL}.

And now we are back in the combinatorics encampment.
  
\vskip 5pt
Okay silly rabbit, enough tricks; I surrender.  
\vskip 5ptFor now.

\vskip 20pt
\noindent
{\bf Acknowledgments.} I would like to thank
Dan Saracino for a very careful reading of an earlier version of
this paper.
I would also like to thank the anonymous referee for an incredibly helpful and detailed report; this paper has been made better because of it.
Finally, thank you to Sohail Farhangi for finding a gap in a proof of
a theorem that appeared in a previous version of this paper.

\vskip 30pt
\footnotesize

\end{document}